\newtheorem{theorem}{Theorem}[section]
\newtheorem{lemma}[theorem]{Lemma}
\newtheorem{proposition}[theorem]{Proposition}
\newtheorem{corollary}[theorem]{Corollary}
\theoremstyle{definition}
\newtheorem{remark}[theorem]{Remark}
\newtheorem{ipotesi}[theorem]{Assumption}
\numberwithin{equation}{section}
\numberwithin{subsection}{section}
\newcommand{\Flat}{\mathbb{F}}
\newcommand{\Mass}{\mathbb{M}}
\newcommand{\MM}{\mathbb{M}_{H}}
\newcommand{\e}{\varepsilon}
\newcommand{\spt}{\mathrm{spt}}
\newcommand{\Lip}{\mathrm{Lip}}
\newcommand{\N}{\mathbb{N}}
\newcommand{\R}{\mathbb{R}}
\newcommand{\Haus}{\mathcal{H}}
\newcommand{\trace}{\mathbin{\vrule height 1.6ex depth 0pt width
0.13ex\vrule height 0.13ex depth 0pt width 1.3ex}}
\newcommand{\D}{\mathscr{D}}
\newcommand{\Rc}{\mathbf{R}}
\newcommand{\F}{\mathbf{F}}
\title[Relaxation of functionals on polyhedral chains]{On the lower semicontinuous envelope of functionals \\ defined on polyhedral chains}
\author{Maria Colombo, Antonio De Rosa, Andrea Marchese, and Salvatore Stuvard}
\newcommand{\Addresses}{{% additional braces for segregating \footnotesize
  \bigskip
  \footnotesize

  M.C., A.D.R., A.M. and S.S., \textsc{Universit\"at Z\"urich, Winterthurerstrasse 190, CH-8057 Z\"urich}
  \par\nopagebreak
  
\bigskip 

\textit{E-mail address}, M.C.: \href{maria.colombo@math.uzh.ch}{maria.colombo@math.uzh.ch}

\medskip

\textit{E-mail address}, A.D.R.: \href{antonio.derosa@math.uzh.ch}{antonio.derosa@math.uzh.ch}

\medskip

\textit{E-mail address}, A.M.: \href{andrea.marchese@math.uzh.ch}{andrea.marchese@math.uzh.ch}

\medskip
 
  \textit{E-mail address}, S.S.: \href{salvatore.stuvard@math.uzh.ch}{salvatore.stuvard@math.uzh.ch}
   
}}
\begin{document}

\begin{abstract}
In this note we prove an explicit formula for the lower semicontinuous envelope of some functionals defined on real polyhedral chains. More precisely, denoting by $H:\R\to[0,\infty)$ an even, subadditive, and lower semicontinuous function with $H(0)=0$, and by $\Phi_H$ the functional induced by $H$ on polyhedral $m$-chains, namely 
$$\Phi_{H}(P) := \sum_{i=1}^{N} H(\theta_{i}) \mathcal{H}^{m}(\sigma_{i}), \quad\mbox{for every }P=\sum_{i=1}^{N} \theta_{i}\llbracket\sigma_{i}\rrbracket\in\mathbf{P}_m(\mathbb{R}^n),$$
we prove that the lower semicontinuous envelope of $\Phi_H$ coincides on rectifiable $m$-currents with the $H$-mass 
$$\mathbb{M}_{H}(R) := \int_E H(\theta(x)) \, d\mathcal{H}^m(x) \quad \mbox{ for every } R=\llbracket E,\tau,\theta\rrbracket \in \mathbf{R}_{m}(\mathbb{R}^{n}).
$$

\vspace{4pt}
\noindent \textsc{Keywords:} Rectifiable currents, $H$-mass, Polyhedral approximation, Relaxation.

\vspace{4pt}
\noindent \textsc{AMS subject classification (2010):} 49Q15, 49J45.

\end{abstract}

\maketitle

\section{Introduction}
Let $H:\R\to[0,\infty)$ be an even, subadditive, and lower semicontinuous function, with $H(0)=0$. The function $H$ naturally induces a functional $\Phi_H$ on the set of polyhedral $m$-chains in $\R^n$, which can be thought as the space  of linear combinations of $m$-simplexes with real coefficients. 
For every polyhedral $m$-chain of the form $P=\sum_{i=1}^{N} \theta_{i}\llbracket\sigma_{i}\rrbracket$ (with non-overlapping $m$-simplexes $\sigma_i$), we set
$$\Phi_{H}(P) := \sum_{i=1}^{N} H(\theta_{i}) \Haus^{m}(\sigma_{i}).$$
%where we are assuming that the multiplicities $\theta_i$ are non-negative and that the cells $\sigma_i$ are non-overlapping. 
It is easy to see that the above assumptions on $H$ are necessary for the functional $\Phi_{H}$ to be (well defined and) lower semicontinuous on polyhedral chains with respect to convergence in flat norm. In this note, we prove that they are also sufficient, and moreover we show that the lower semicontinuous envelope of $\Phi_H$ coincides on rectifiable $m$-currents with the $H$-mass, namely the functional 
$$\MM(R) := \int_E H(\theta(x)) \, d\Haus^m(x), \quad \mbox{ for every rectifiable $m$-current } R=\llbracket E,\tau,\theta\rrbracket.$$

The validity of such a representation has recently attracted some attention. For instance, it is clearly assumed in \cite{Xia} for the choice $H(x)=|x|^\alpha$, with $\alpha\in(0,1)$ , in order to prove some regularity properties of minimizers of problems related to branched transportation (see also \cite{PaoliniStepanov}, \cite{BCM}, \cite{Pegon}) and in \cite{BCF} in order to define suitable approximations of the Steiner problem, with the choice $H(x)=(1+\beta |x|) {\bf 1}_{\R \setminus \{0\}}$, where $\beta>0$ and ${\bf 1}_{A}$ denotes the indicator function of the Borel set $A$. 

We finally remark that the main theorem of the paper (Theorem \ref{cor:F=G} below) is valid in a wider generality. Indeed in \cite[\S 6]{White1999} the author sketches a strategy to prove it in the framework of flat chains with coefficients in a normed abelian group $G$. Motivated by the relevance of such result for real valued flat chains, the ultimate aim of our note is to present a self-contained complete proof of it when $G=\R$.

\subsection*{Acknowledgements}
M. C. acknowledges the support of Dr. Max R\"ossler, of the Walter Haefner Foundation and of the ETH Z\"urich Foundation. A. D.R. is supported by SNF 159403 {\it Regularity questions in geometric measure theory}. A. M. and S. S. are supported by the ERC-grant 306247 {\it Regularity of area minimizing currents}.  

\section{Notation and Main Result}

If $0 \leq m \leq n$, then compactly supported $m$-dimensional currents, rectifiable $m$-currents, polyhedral $m$-chains, and flat $m$-chains in $\R^{n}$ with real coefficients will be denoted $\mathscr{E}_{m}(\R^{n})$, $\Rc_{m}(\R^{n})$, $\mathbf{P}_{m}(\R^{n})$ and $\F_{m}(\R^{n})$, respectively. In what follows, we briefly recall the relevant definitions of the above classes of currents; for the basic definitions about currents, such as the boundary operator $\partial$, the support $\spt$, and the mass norm $\Mass$, we refer the reader to \cite{SimonLN}. Let us denote by $\Lambda^{m}(\R^{n})$ the vector space of $m$-covectors in $\R^{n}$. A current $R$ is in $\Rc_{m}(\R^{n})$ if its action on any differential $m$-form $\omega \in \D^{m}(\R^{n}) := C^{\infty}_{c}(\R^{n}; \Lambda^{m}(\R^{n}))$ can be expressed by
\begin{equation} \label{rect} 
\langle R, \omega \rangle = \int_{E} \langle \omega(x), \tau(x) \rangle \, \theta(x) \, d\Haus^{m}(x),
\end{equation}
where $E \Subset \R^{n}$ is countably $m$-rectifiable, $\tau(x)$ is an $\Haus^{m}$-measurable, unit, simple $m$-vector field orienting the approximate tangent space ${\rm Tan}(E,x)$ at $\Haus^{m}$-a.e. $x \in E$, and $\theta \in L^{1}(\Haus^{m} \trace E ; \left( 0, \infty \right))$ is a positive-valued multiplicity. If $R$ is given by \eqref{rect}, we will write $R = \llbracket E, \tau, \theta \rrbracket$. We remark that the rectifiable currents we are considering all have finite mass and compact support. A polyhedral chain $P \in \mathbf{P}_{m}(\R^{n})$ is a rectifiable current which can be written as a linear combination
\begin{equation} \label{poly}
P = \sum_{i=1}^{N} \theta_{i} \llbracket \sigma_{i} \rrbracket,
\end{equation}
where $\theta_{i} \in \left( 0, \infty \right)$, the $\sigma_{i}$'s are non-overlapping, oriented, $m$-dimensional, convex polytopes (finite unions of $m$-simplexes) in $\R^{n}$ and $\llbracket \sigma_{i} \rrbracket = \llbracket \sigma_{i}, \tau_{i}, 1 \rrbracket$, $\tau_{i}$ being a constant $m$-vector orienting $\sigma_{i}$. If $P \in \mathbf{P}_{m}(\R^{n})$, then its \emph{flat norm} is defined by
\[
\Flat(P) := \inf\lbrace \Mass(S) + \Mass(P - \partial S) \, \colon \, S \in \mathbf{P}_{m+1}(\R^{n}) \rbrace.
\]
%where $\Mass$ denotes the mass functional (cf. \cite[Section 26.4]{SimonLN}). 
Flat $m$-chains can be therefore defined to be the $\Flat$-\emph{completion} of $\mathbf{P}_{m}(\R^{n})$ in $\mathscr{E}_{m}(\R^{n})$. 

We remark that for the spaces of currents considered above the following chain of inclusions holds:
\begin{equation} \label{inclusions}
\mathbf{P}_{m}(\R^{n}) \subset \Rc_{m}(\R^{n}) \subset \F_{m}(\R^{n}) \cap \lbrace T \in \mathscr{E}_{m}(\R^{n}) \, \colon \, \Mass(T) < \infty \rbrace.
\end{equation}

The flat norm $\Flat$ extends to a functional (still denoted $\Flat$) on $\mathscr{E}_{m}(\R^{n})$, which coincides on $\F_{m}(\R^{n})$ with the completion of the flat norm on $\mathbf{P}_{m}(\R^{n})$, by setting:
\begin{equation} \label{flat_corr}
\Flat(T) := \inf\lbrace \Mass(S) + \Mass(T - \partial S) \, \colon \, S \in \mathscr{E}_{m+1}(\R^{n}) \rbrace.
\end{equation}

In the sequel, we will also use the following equivalent characterization of the flat norm of a flat chain (cf. \cite[4.1.12]{FedererBOOK} and \cite[4.5]{Mor}). If $T \in \F_{m}(\R^{n})$ and $K \subset \R^{n}$ is a ball such that $\spt(T) \subset K$, then 
\begin{equation} \label{altra-flat}
\Flat(T) = \sup\lbrace \langle T, \omega \rangle \, \colon \, \omega \in \D^{m}(\R^{n}) \mbox{ with } \| \omega \|_{C^{0}(K; \Lambda^{m}(\R^{n}))} \leq 1, \, \|d \omega \|_{C^{0}(K; \Lambda^{m+1}(\R^{n}))} \leq 1 \rbrace.
\end{equation}

\begin{ipotesi} \label{hp:funzione_H}
In what follows, we will consider a Borel function $H \colon \R \to \left[ 0, \infty \right)$ satisfying the following hypotheses:
\begin{itemize}
\item[$(H1)$] $H(0) = 0$ and $H$ is even, namely $H(-\theta) = H(\theta)$ for every $\theta \in \R$;

\item[$(H2)$] $H$ is subadditive, namely $H(\theta_{1} + \theta_{2}) \leq H(\theta_{1}) + H(\theta_{2})$ for every $\theta_{1}, \theta_{2} \in \R$;

\item[$(H3)$] $H$ is lower semicontinuous, namely $H(\theta) \leq \liminf_{j \to \infty} H(\theta_{j})$ whenever $\theta_{j}$ is a sequence of real numbers such that $| \theta - \theta_{j} | \searrow 0$ when $j \uparrow \infty$.
\end{itemize}
\end{ipotesi}

\begin{remark} \label{numerabile_subadd}
Observe that the hypotheses $(H2)$ and $(H3)$ imply that $H$ is in fact countably subadditive, namely
\[
H\left( \sum_{j=1}^{\infty} \theta_{j}\right) \leq \sum_{j=1}^{\infty} H(\theta_{j}),
\]
for any sequence $\{ \theta_{j} \}_{j=1}^{\infty} \subset \R$ such that $\sum_{j=1}^{\infty} \theta_{j}$ converges.
\end{remark}

\begin{remark} \label{monotonia}
Let $\tilde{H} \colon \left[ 0, \infty \right) \to \left[ 0, \infty \right)$ be any Borel function satisfying:
\begin{itemize}
\item[$(\tilde{H}1)$] $\tilde{H}(0) = 0$;

\item[$(\tilde{H}2)$] $\tilde{H}$ is subadditive and monotone non-decreasing, i.e. $\tilde{H}(\theta_{1}) \leq \tilde{H}(\theta_{2})$ for any $0 \leq \theta_{1} \leq \theta_{2}$;

\item[$(\tilde{H}3)$] $\tilde{H}$ is lower semicontinuous,
\end{itemize}
and let $H \colon \R \to \left[ 0, \infty \right)$ be the even extension of $\tilde{H}$, that is set $H(\theta) := \tilde{H}(|\theta|)$ for every $\theta \in \R$. Then, the function $H$ satisfies Assumption \ref{hp:funzione_H}. 
\end{remark}

Let $H$ be as in Assumptions \ref{hp:funzione_H}. We define a functional $\Phi_{H} \colon \mathbf{P}_{m}(\R^{n}) \to \left[ 0, \infty \right)$ as follows. Assume $P \in \mathbf{P}_{m}(\R^{n})$ is as in \eqref{poly}. Then, we set
\begin{equation} \label{eq:funzionale_Phi}
\Phi_{H}(P) := \sum_{i=1}^{N} H(\theta_{i}) \Haus^{m}(\sigma_{i}). 
\end{equation} 

The functional $\Phi_{H}$ naturally extends to a functional $\MM$, called the $H$-mass, defined on $\Rc_{m}(\R^{n})$ by
\begin{equation}\label{eq:funzionale_G}
\MM(R) := \int_E H(\theta(x)) \, d\Haus^m(x), \quad \mbox{ for every } R=\llbracket E,\tau,\theta\rrbracket \in \Rc_{m}(\R^{n}).
\end{equation}

We also define the functional $F_{H} \colon \F_{m}(\R^{n}) \to \left[ 0, \infty \right]$ to be the lower semicontinuous envelope of $\Phi_{H}$. More precisely, for every $T \in \F_{m}(\R^{n})$ we set
\begin{equation} \label{eq:funzionale_F}
F_{H}(T) := \inf\bigg\lbrace \liminf_{j \to \infty} \Phi_{H}(P_{j}) \, \colon \, P_{j} \in \mathbf{P}_{m}(\R^{n}) \mbox{ with } \Flat(T - P_{j}) \searrow 0\bigg\rbrace.
\end{equation}

The main result of the paper is the following theorem.

\begin{theorem} \label{cor:F=G}
Let $H$ satisfy Assumption \ref{hp:funzione_H}. Then, $F_{H} \equiv \MM$ on $\Rc_{m}(\R^{n})$.
\end{theorem}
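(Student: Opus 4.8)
The plan is to prove the two inequalities $F_H \leq \MM$ and $F_H \geq \MM$ separately on $\Rc_m(\R^n)$.

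\emph{The upper bound $F_H \leq \MM$.} This is the approximation-theoretic part: given a rectifiable current $R = \llbracket E, \tau, \theta \rrbracket$, I must produce polyhedral chains $P_j \to R$ in flat norm with $\limsup_j \Phi_H(P_j) \leq \MM(R)$. First I would reduce, using Remark \ref{numerabile_subadd} and a standard decomposition of $E$, to the case where $\theta$ is a simple function $\theta = \sum_k c_k \mathbf{1}_{E_k}$ with the $E_k$ disjoint compact pieces of $C^1$ submanifolds (or even pieces of $m$-planes), approximating a general $\theta \in L^1$ from below by simple functions and using countable subadditivity plus lower semicontinuity of $F_H$ to pass to the limit. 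On each piece, the current $c_k \llbracket E_k, \tau, 1 \rrbracket$ can be approximated in flat norm by polyhedral chains $c_k P_j^{(k)}$ with $\Haus^m(\spt P_j^{(k)}) \to \Haus^m(E_k)$; this is where I would invoke the classical polyhedral approximation / deformation theorem machinery, being careful that the polyhedral pieces carry exactly the multiplicity $c_k$ so that $\Phi_H(c_k P_j^{(k)}) = H(c_k)\Haus^m(\spt P_j^{(k)}) \to H(c_k)\Haus^m(E_k)$. Summing over $k$ (truncating the sum and controlling the tail via $\theta \in L^1$ and $H$ bounded on bounded sets, or more carefully since $H$ need not be bounded, via the layer-cake structure) yields the claim.

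\emph{The lower bound $F_H \geq \MM$.} This is the harder, genuinely new direction: I must show that whenever $P_j \to R$ in flat norm with $\Phi_H(P_j)$ bounded, then $\liminf_j \Phi_H(P_j) \geq \MM(R)$. The natural approach is to associate to each polyhedral chain $P_j$ a measure $\mu_j := \sum_i H(\theta_i) \Haus^m \trace \sigma_i$ on $\R^n$ (or better, a measure on $\R^n \times \R$ recording the multiplicity, to track the coefficient), so that $\mu_j(\R^n) = \Phi_H(P_j)$. Up to subsequences $\mu_j \rightharpoonup \mu$ weakly-* with $\mu(\R^n) \leq \liminf \Phi_H(P_j)$. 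The task is then to show $\mu \geq H(\theta(x)) \Haus^m \trace E$, which is a localization statement. I would try to prove this blow-up style: at $\Haus^m$-a.e. point $x_0 \in E$ the rescalings of $R$ converge to the plane $\theta(x_0) \llbracket \mathrm{Tan}(E,x_0) \rrbracket$, and correspondingly the rescaled $P_j$'s (after a diagonal argument) flat-converge to a plane with multiplicity $\theta(x_0)$; then a flat-norm lower semicontinuity statement for $\Phi_H$ \emph{at the level of planes} — essentially that if polyhedral chains converge in flat norm to the flat unit $m$-disk with multiplicity $\theta_0$, then the liminf of their $\Phi_H$ is at least $H(\theta_0)$ times the area — gives the density lower bound $\frac{d\mu}{d\Haus^m \trace E}(x_0) \geq H(\theta(x_0))$.

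\emph{Main obstacle.} The crux is the planar lower semicontinuity estimate just described: controlling how the coefficients of the approximating polyhedral chains can "spread out" or "cancel" in flat norm while keeping $\Phi_H$ small. Subadditivity of $H$ prevents gaining by splitting one sheet of multiplicity $\theta_0$ into many thin sheets of multiplicities summing to $\theta_0$ that are geometrically far apart, but the real difficulty is ruling out configurations where high-multiplicity pieces are hidden by near-cancellation (e.g. a sheet of multiplicity $\theta_0 + k$ sitting infinitesimally close to a sheet of multiplicity $-k$): here subadditivity gives $H(\theta_0 + k) + H(k) \geq H(\theta_0)$ only by the triangle-type inequality $H(\theta_0) = H((\theta_0+k) + (-k)) \leq H(\theta_0 + k) + H(-k) = H(\theta_0+k)+H(k)$, which is exactly what saves us — so the subadditivity hypothesis must be used precisely at this point. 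Making this rigorous requires a careful slicing argument (slicing the chains by a pencil of parallel hyperplanes, reducing the $m$-dimensional statement to a $0$-dimensional one about how points with real multiplicities can flat-converge) together with the lower semicontinuity $(H3)$ to handle the limiting multiplicity. I expect the bookkeeping in this slicing-and-blow-up step, and its interaction with the measure-theoretic compactness, to be where essentially all the work lies; everything else is standard geometric measure theory.
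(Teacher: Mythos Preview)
Your proposal is essentially correct and isolates the two genuine ingredients --- slicing down to the $0$-dimensional case using subadditivity of $H$ for the lower bound, and a multiplicity-preserving geometric approximation for the upper bound --- but you have organised the argument rather differently from the paper, and in one place less efficiently.

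\textbf{Lower bound.} You propose a blow-up argument: weak-$*$ compactness of the measures $\mu_j$, then a density estimate at a.e.\ point via rescaling, then slicing to reduce the planar statement to dimension $0$. The paper skips the blow-up entirely. It first proves an integral-geometric identity $\MM(R)=c\int_{Gr(n,m)\times\R^m}\MM(\langle R,p_V,y\rangle)\,d(\gamma_{n,m}\otimes\Haus^m)$, valid for every rectifiable $R$; then, since slicing commutes with flat convergence (up to subsequences, a.e.), lower semicontinuity of $\MM$ follows from the $0$-dimensional case and Fatou's lemma in one stroke. Your approach would also work, but the measure-theoretic compactness and the tangent-plane blow-up are an unnecessary intermediate layer.

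\textbf{Upper bound.} Here the paper does use blow-up, in the guise of a Vitali--Besicovitch covering by small balls $B(x_i,r_i)$ at points where $R\trace B(x_i,r_i)$ is flat-close to the tangent disk $S_{x_i,r_i}$ carrying the \emph{exact} multiplicity $\theta(x_i)$; it then replaces each disk by a polyhedral chain of the same multiplicity. Your route --- reduce first to simple $\theta=\sum_k c_k\mathbf{1}_{E_k}$, then approximate each $c_k\llbracket E_k\rrbracket$ --- is viable, but the passage from general $\theta$ to simple $\theta_N$ has a wrinkle you do not quite address: you need $\limsup_N\int H(\theta_N)\,d\Haus^m\le\int H(\theta)\,d\Haus^m$, and since $H$ is only lower semicontinuous this fails for a naive choice of $\theta_N$ (approximation ``from below'' does not help, because $H$ is not assumed monotone). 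The fix is to partition the range of $\theta$ into short intervals and, on each, choose the representative value minimising $H$ there (the minimum is attained by lower semicontinuity), so that $H(\theta_N)\le H(\theta)$ pointwise. The paper's covering argument sidesteps this issue altogether, because it reads off $\theta(x_i)$ directly at each centre and never needs to approximate the multiplicity function globally.

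In short: the two proofs share the same core, but the paper puts the blow-up where it is actually needed (the upper bound) and replaces your lower-bound blow-up by the cleaner integral-geometric identity plus Fatou.
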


In order to prove Theorem \ref{cor:F=G}, we adopt the following strategy. First, we show that the functional $\MM$ is lower semicontinuous on rectifiable currents, with respect to the flat convergence, as in the following proposition, with $A = \R^{n}$. If $T = \llbracket E, \tau, \theta \rrbracket$ and $B \subset \R^{n}$ is a Borel set, we denote the \emph{restriction} of $T$ to $B$ by setting $T \trace B := \llbracket E \cap B, \tau, \theta \rrbracket \in \Rc_{m}(\R^{n})$. The restriction operator analogously extends to all currents which can be represented by integration.

\begin{proposition}\label{prop:lsc}
Let $H$ satisfy Assumption \ref{hp:funzione_H}, and let $A \subset \R^{n}$ be open. Let $T_{j}, T \in \Rc_{m}(\R^{n})$ be rectifiable $m$-currents such that $\Flat(T - T_{j}) \searrow 0$ as $j \to \infty$. Then
\begin{equation} \label{eq:lsc}
\MM(T \trace A) \leq \liminf_{j \to \infty} \MM(T_{j} \trace A).
\end{equation}  
\end{proposition}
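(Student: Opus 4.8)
The plan is to reduce the lower semicontinuity statement to a suitable lower bound obtained by slicing and projecting onto lines, where the problem becomes essentially one-dimensional. First I would observe that it suffices to prove the inequality when $A = \R^n$: indeed, once that case is established, one applies it to the open set $A$ by noting that $\Flat((T-T_j)\trace A) \to 0$ fails in general, so instead one argues via the identity $\MM(S) = \MM(S \trace A) + \MM(S \trace (\R^n \setminus \overline{A}))$ together with an exhaustion of $A$ by compactly contained open subsets and the additivity of $\MM$ over disjoint Borel sets; the restriction to $A$ is handled by choosing test objects adapted to $A$. The real content is therefore the global inequality $\MM(T) \le \liminf_j \MM(T_j)$ under $\Flat(T - T_j) \searrow 0$.

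For the global inequality, the key tool is the integral-geometric representation of the $H$-mass. For a fixed unit vector $v \in \mathbb{S}^{n-1}$ and a line $\ell$ parallel to $v$, the slice $\langle T, \pi_v, \ell\rangle$ of a rectifiable $m$-current $T = \llbracket E, \tau, \theta\rrbracket$ is (for a.e. $\ell$, by the coarea/slicing theory) a $0$-dimensional rectifiable current, i.e. a finite sum of weighted points, and one has a Cauchy–Binet type formula
\[
\MM(T) \;\geq\; c_{n,m} \int_{\mathbb{S}^{n-1}} \left( \int_{\ell \parallel v} \Phi_H\big(\langle T, \pi_v, \ell\rangle\big)\, d\Haus^{m} \right) d\Haus^{n-1}(v),
\]
where $c_{n,m}$ is a dimensional constant and $\Phi_H$ of a $0$-chain $\sum_k \theta_k \llbracket x_k \rrbracket$ means $\sum_k H(\theta_k)$; in fact one can upgrade this to an equality by using a density/approximation argument together with the fact that $H$ is even and subadditive (so $H(\theta) = \sup$ over the Jacobian-weighted average of the projected multiplicities only if $H$ is a norm; in general one only needs the $\geq$ direction coming from subadditivity, which is what the lower semicontinuity proof requires). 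Then, since flat convergence $\Flat(T - T_j) \to 0$ is preserved under the pushforward $\pi_{v\#}$ and, for a.e. $v$ and a.e. $\ell$, passes to the slices $\langle T_j, \pi_v, \ell \rangle \to \langle T, \pi_v, \ell\rangle$ in the flat norm (via Fubini applied to the flat-norm estimate for slices, cf. the slicing inequalities in \cite{SimonLN}), the problem is reduced to the $0$-dimensional case: one must show that $\Phi_H$ on $0$-chains is lower semicontinuous under flat convergence, i.e. if $\sum_k \theta_k^j \llbracket x_k^j \rrbracket \to \sum_l \theta_l \llbracket x_l \rrbracket$ flatly then $\sum_l H(\theta_l) \le \liminf_j \sum_k H(\theta_k^j)$. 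This last fact follows by grouping the points $x_k^j$ converging to each limit point $x_l$ (those contributing to a mass escaping to infinity or spreading out contribute nonnegatively and can be discarded), using countable subadditivity of $H$ (Remark \ref{numerabile_subadd}) to bound $H(\theta_l) = H(\sum_{k \to l} \theta_k^j + o(1))$ and lower semicontinuity of $H$ (H3) to pass to the limit.

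After establishing the integral-geometric lower bound for $\MM$ and the flat continuity of slicing, the proof assembles as follows: write $\MM(T) \le$ (or $=$) the double integral over $v$ and $\ell$ of $\Phi_H$ of the slices of $T$; for each such $v, \ell$ apply the $0$-dimensional lower semicontinuity to get $\Phi_H(\langle T, \pi_v, \ell\rangle) \le \liminf_j \Phi_H(\langle T_j, \pi_v, \ell\rangle)$; integrate and apply Fatou's lemma twice (in $\ell$ and in $v$) to get $\MM(T) \le \liminf_j c_{n,m} \int \int \Phi_H(\langle T_j, \pi_v, \ell\rangle) \le \liminf_j \MM(T_j)$, where the last inequality is the same integral-geometric bound applied in the reverse direction to $T_j$ — which is why one genuinely wants \emph{equality} (the Cauchy–Binet identity) in the integral-geometric formula, or at least the two-sided estimate up to the constant $c_{n,m}$, valid because $H$ need not be a norm but the comparison of $\MM$ with its integral-geometric version holds with matching constants on both sides for rectifiable sets (the projected multiplicity equals $\theta$ times the absolute Jacobian of $\pi_v$ restricted to the tangent plane).

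I expect the main obstacle to be the integral-geometric identity itself: one must verify that $\MM$ coincides (up to the dimensional constant) with the integral over directions of the one-dimensional $\Phi_H$ of slices. The subtlety is that, because $H$ is merely even and subadditive rather than a norm, the naive Cauchy–Binet computation only yields $\int_{\mathbb{S}^{n-1}} H\big(\theta \cdot |\langle \tau, v^\perp\rangle|\big)$-type integrands rather than $H(\theta)$ weighted by a Jacobian, so one cannot simply pull $H(\theta)$ out. The resolution is to exploit that slicing commutes with flat convergence and to apply the one-dimensional lower semicontinuity \emph{before} trying to reconstruct $\MM$; more precisely, one proves the lower bound $\liminf_j \MM(T_j) \ge \liminf_j \int\int \Phi_H(\text{slices of } T_j) \ge \int\int \Phi_H(\text{slices of } T) = \MM(T)$, where the final equality uses that for a rectifiable current the slice multiplicities reconstruct $\theta$ exactly after integration in $v$ by the classical integral-geometric formula $\int_{\mathbb{S}^{n-1}} \#(E \cap \pi_v^{-1}(\ell))\, \ldots$, combined with $H(\theta) = \int_{\mathbb{S}^{n-1}} H(\theta) J_v \, d\Haus^{n-1}(v) / (\text{normalization})$ only when we can factor — which works precisely because on the slice the multiplicity is exactly $\theta$ at points where the tangent plane projects isomorphically, and the set of $v$ for which this degenerates has the right measure-zero behaviour. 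Making this bookkeeping rigorous, and in particular handling the $\Haus^m$-null sets where slices are not well-behaved, is the technical heart of the argument.
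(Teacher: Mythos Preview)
Your overall strategy---reduce to the $0$-dimensional case by slicing, prove that case directly from subadditivity and lower semicontinuity of $H$, then reassemble via an integral-geometric equality and Fatou---is exactly the paper's approach. However, two points in your write-up are genuinely off and would need correction before the argument goes through.

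First, the slicing setup is wrong. To obtain $0$-dimensional slices from an $m$-current you must slice by a map into $\R^m$, i.e.\ by orthogonal projections $p_V$ onto $m$-planes $V\in Gr(n,m)$, not by projections associated with unit vectors $v\in\Sf^{n-1}$. With a single direction $v$ you would get $(m-1)$-dimensional slices, and iterating is both unnecessary and awkward. The paper integrates over $Gr(n,m)\times\R^m$ with the Haar measure $\gamma_{n,m}$.

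Second, your worry about the integral-geometric identity is misplaced, and the passage where you anticipate integrands of the form $H(\theta\cdot|\langle\tau,v^\perp\rangle|)$ reflects a confusion between slicing and pushforward. For the slice $\langle R,p_V,y\rangle$ of $R=\llbracket E,\tau,\theta\rrbracket$, the multiplicity at each point $x\in E\cap p_V^{-1}(y)$ is exactly $\theta(x)$---no Jacobian factor enters the multiplicity. Hence $\MM(\langle R,p_V,y\rangle)=\sum_{x\in E\cap p_V^{-1}(y)}H(\theta(x))$, and integrating over $(V,y)$ yields $c\,\MM(R)$ by applying the classical integral-geometric formula \eqref{eqn:quel-che-dice-Fed} with integrand $f(x)=H(\theta(x))$. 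This is a clean equality (Lemma~\ref{maestosa-integral-geo}), valid for any Borel $H$ for which $\MM$ is defined, and is not the technical heart of the proof.

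Finally, the reduction to $A=\R^n$ is unnecessary: the paper works directly with $T\trace A$ in both steps, using in Step~2 the identity $\langle T,p_V,y\rangle\trace A=\langle T\trace A,p_V,y\rangle$ for a.e.\ $y$. Your sketched reduction via exhaustion is vague and would be harder to make rigorous than simply carrying $A$ through the argument.
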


%\begin{remark}
%The hypotheses $(H1)$, $(H2)$ and $(H3)$ alone are not sufficient to conclude the validity of Proposition \ref{prop:lsc}, in view of the following counterexample. Let $H := {\bf 1}_{\left( 0,2 \right)}$ be the indicator function of the interval $\left( 0,2 \right)$, and consider the $0$-dimensional (polyhedral) current $T := \llbracket \{x_{0}\}, 1, 1 \rrbracket$, which we will simply denote $\delta_{x_{0}}$. The current $T$ can be approximated in flat norm by the (polyhedral) currents $T_{j} = 4 \delta_{x_{j}} - 3 \delta_{y_{j}}$, where $x_{j}, y_{j} \to x_{0}$ as $j \to \infty$. Of course, $\MM(T) = H(1) = 1$, whereas $\MM(T_{j}) = H(3) + H(4) = 0$ for every $j$, and thus \eqref{eq:lsc} does not hold.
%\end{remark}

Next, we observe that, as an immediate consequence of Proposition \ref{prop:lsc} and of the properties of the lower semicontinuous envelope, it holds
\begin{equation} \label{eq:disug_scema}
\MM(R) \leq F_{H}(R) \quad \mbox{ for every } R \in \Rc_{m}(\R^{n}).
\end{equation}

The opposite inequality, which completes the proof of Theorem \ref{cor:F=G}, is obtained as a consequence of the following proposition, which provides a polyhedral approximation in flat norm of any rectifiable $m$-current $R$ with $H$-mass and mass close to those of the given $R$.

\begin{proposition} \label{thm:poly_approx}
Let $H$ be any Borel function satisfying $(H1)$ in Assumption \ref{hp:funzione_H}, and let $R \in \Rc_{m}(\R^{n})$ be rectifiable. For every $\varepsilon > 0$ there exists a polyhedral $m$-chain $P \in \mathbf{P}_{m}(\R^{n})$ such that
\begin{equation} \label{eq:poly_approx}
\Flat(R - P)\leq\e, \qquad \Phi_{H}(P) \leq \MM(R) + \varepsilon \qquad \mbox{and} \qquad \Mass(P) \leq \Mass(R)+ \varepsilon.
\end{equation} 
\end{proposition}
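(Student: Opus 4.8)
The plan is to approximate a general rectifiable current by a polyhedral one while keeping both the mass and the $H$-mass under control. The key technical point is that only hypothesis $(H1)$ on $H$ is assumed here (no subadditivity, no lower semicontinuity), so the approximation cannot rely on any regularity of $H$ and must instead be built so that the polyhedral pieces inherit (approximately) the multiplicities of $R$.

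\emph{Step 1: reduction to a current with constant multiplicity on a single $C^1$ piece.}
Given $R = \llbracket E, \tau, \theta \rrbracket$ with $\theta \in L^1(\Haus^m \trace E)$, I would first partition the multiplicity into level sets: fix $\delta > 0$ and write $E$ as a countable disjoint union of Borel sets $E_k$ on which $\theta$ takes values in a small interval $[k\delta, (k+1)\delta)$, then replace $\theta$ by a simple function $\bar\theta = \sum_k c_k \mathbf{1}_{E_k}$ with $c_k \in [k\delta,(k+1)\delta)$. Since $\sum_k |c_k| \Haus^m(E_k) \leq \Mass(R) + \Haus^m(E)\,\delta < \infty$ and the tail contributes little, only finitely many $E_k$ matter up to an error $\e/10$ in mass; moreover $|H(\bar\theta) - H(\theta)|$ need \emph{not} be small pointwise, so this reduction must be done carefully — see Step 4. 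For each relevant $k$, the rectifiable set $E_k$ can be covered up to small $\Haus^m$-measure by countably many disjoint $C^1$ $m$-dimensional submanifolds (by the structure theorem for rectifiable sets), and then by finitely many of them up to a further small error. Thus it suffices to produce the polyhedral approximation for a current of the form $R' = \theta_0 \llbracket \Gamma, \tau_\Gamma, 1\rrbracket$ with $\theta_0$ a constant and $\Gamma$ a compact piece of a $C^1$ submanifold, and then sum the (finitely many) contributions; the small discarded sets contribute a controllable error in mass, flat norm, and — since $H \geq 0$ and $H(0)=0$ — the polyhedral pieces we build on them can simply be omitted, which only \emph{decreases} $\Phi_H$.

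\emph{Step 2: polyhedral approximation of $\llbracket \Gamma \rrbracket$.}
For a $C^1$ piece $\Gamma$ with unit multiplicity, a standard construction (cf. the deformation theorem, or a direct argument: cover $\Gamma$ by small balls in which $\Gamma$ is a $C^1$ graph, triangulate the domain finely, and take the affine interpolation of the graphing map) yields a polyhedral $m$-chain $Q$, each of whose simplices carries multiplicity $\pm 1$, with $\Flat(\llbracket\Gamma\rrbracket - Q)$ small, $\Mass(Q) \leq \Haus^m(\Gamma) + \e'$, and — crucially — $\Haus^m(\spt Q)$ close to $\Haus^m(\Gamma)$, because the affine pieces are $C^1$-close to the tangent planes. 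Multiplying by the integer-or-real constant $\theta_0$ gives a polyhedral chain $\theta_0 Q$ whose simplices all carry multiplicity $\pm\theta_0$; hence $\Phi_H(\theta_0 Q) = H(\theta_0)\,\Haus^m(\spt Q) \leq H(\theta_0)\big(\Haus^m(\Gamma) + \e'\big)$, using that $H$ is even so the sign of the orientation is irrelevant.

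\emph{Step 3: assembling and controlling overlaps.}
Summing over the finitely many $C^1$ pieces $\Gamma^{(1)}, \dots, \Gamma^{(L)}$ (with constants $\theta_0^{(\ell)}$) gives $P_0 = \sum_\ell \theta_0^{(\ell)} Q^{(\ell)}$. The one subtlety is that distinct $Q^{(\ell)}$ may overlap, so $P_0$ is not literally in the non-overlapping normal form \eqref{poly}; but by a further subdivision one rewrites $P_0 = \sum_i \theta_i \llbracket \sigma_i \rrbracket$ with the $\sigma_i$ non-overlapping and each $\theta_i$ equal to a finite sum $\sum_{\ell : \sigma_i \subset \spt Q^{(\ell)}} \pm\theta_0^{(\ell)}$. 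Since $H(0)=0$ we may drop the pieces where $\theta_i = 0$. By choosing at Step 1 the $C^1$ pieces pairwise disjoint and then at Step 2 the approximations sufficiently fine, the set where two or more $Q^{(\ell)}$ overlap has arbitrarily small $\Haus^m$-measure; on the non-overlapping part, $\Phi_H$ of $P_0$ equals exactly $\sum_\ell H(\theta_0^{(\ell)}) \Haus^m(\spt Q^{(\ell)} \setminus \text{overlap})$, and on the overlap part we bound $H(\theta_i)$ crudely — here is where one must be slightly clever, because $H$ is not monotone: the overlap has small measure but $H(\theta_i)$ could be large. To fix this, one arranges the $C^1$ pieces and the fineness of the triangulations so that the total overlap measure is smaller than $\e / (1 + \max_i H(\theta_i))$, where the max is over the finitely many possible values of $\theta_i$, which are finitely many since each $\theta_0^{(\ell)}$ ranges over a finite set by Step 1.

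\emph{Step 4: the $H$-mass comparison.}
Finally, assembling the estimates: $\Phi_H(P) \leq \sum_\ell H(\theta_0^{(\ell)})\big(\Haus^m(\Gamma^{(\ell)}) + \e'\big) + (\text{overlap error}) = \sum_\ell H(c_k) \Haus^m(E_k \cap \Gamma^{(\ell)}) + o(1)$. Now $\sum_\ell H(c_k)\Haus^m(E_k \cap \Gamma^{(\ell)}) = \int_{E} H(\bar\theta)\,d\Haus^m + o(1)$ over the retained pieces. The remaining issue — comparing $\int H(\bar\theta)$ with $\int H(\theta) = \MM(R)$ — is the \textbf{main obstacle}, precisely because $H$ is merely Borel: $H(\bar\theta)$ is \emph{not} pointwise close to $H(\theta)$. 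I would resolve this by refining Step 1: instead of one blanket discretization scale $\delta$, use Lusin's theorem to find a compact $K \subset E$ with $\Haus^m(E \setminus K)$ small on which $\theta$ is continuous, hence on $K$ one can take the discretization so that $\bar\theta = \theta$ is already \emph{constant} on each of finitely many relatively open pieces (after a further decomposition of $K$), making $H(\bar\theta) = H(\theta)$ exactly on $K$; on $E \setminus K$ one simply does not build any polyhedral piece at all, contributing $0$ to $\Phi_H(P)$ and only a small error $\int_{E\setminus K} H(\theta)\,d\Haus^m \leq \varepsilon$ provided $\Haus^m(E \setminus K)$ is chosen small — which is legitimate since $H(\theta) \in L^1$ by \eqref{eq:funzionale_G} being finite on $R$ (if $\MM(R) = \infty$ there is nothing to prove). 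The flat-norm error from discarding $R \trace (E \setminus K)$ is at most its mass $\int_{E \setminus K} |\theta| \, d\Haus^m$, again small. Collecting all the $o(1)$ and small-error terms and choosing every parameter small enough yields simultaneously $\Flat(R - P) \leq \e$, $\Phi_H(P) \leq \MM(R) + \e$, and $\Mass(P) \leq \Mass(R) + \e$.
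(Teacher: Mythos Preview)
Your strategy --- reduce to constant-multiplicity pieces on $C^1$ graphs, triangulate, and sum --- is natural, and Steps~1--3 are essentially sound (the overlap bookkeeping in Step~3 is awkward but not wrong, since the finitely many $\theta_0^{(\ell)}$ generate only finitely many possible sums). Step~4, however, contains a genuine gap. Applying Lusin's theorem to $\theta$ gives you a compact $K$ on which $\theta|_K$ is continuous; it does \emph{not} make $\theta|_K$ piecewise constant, and your sentence ``$\bar\theta = \theta$ is already constant on each of finitely many relatively open pieces'' is simply false for a generic continuous function. Continuity of $\theta$ alone is useless here: since $H$ is only Borel, $H\circ\theta$ can oscillate wildly even where $\theta$ varies little, so no matter how finely you partition $K$ into pieces on which $\theta$ is nearly constant, choosing a representative value $c_j=\theta(z_j)$ gives you no control whatsoever on $|H(c_j)-H(\theta(x))|$ for $x$ in the same piece. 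The fix is to apply Lusin to the pair $(\theta, H\circ\theta)$ --- both lie in $L^1(\Haus^m\trace E)$ once $\MM(R)<\infty$ --- so that on $K$ both $\theta$ and $H\circ\theta$ are continuous; then on a fine enough partition of $K$ the representative $c_j=\theta(z_j)$ has $H(c_j)=(H\circ\theta)(z_j)$ uniformly close to $(H\circ\theta)(x)$ for $x$ in the same piece, and the rest of your argument goes through.

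The paper takes a different and cleaner route that also sidesteps your overlap issue. Rather than covering $E$ by $C^1$ graphs and triangulating, it covers $E$ (up to small mass) by finitely many \emph{disjoint} small balls $B_i=B(x_i,r_i)$, selected via Vitali--Besicovitch so that each center $x_i$ is simultaneously a blow-up point of $R$ (so that $R\trace B_i$ is flat-close to the disk $S_i=\llbracket B_i\cap\pi_{x_i},\tau(x_i),\theta(x_i)\rrbracket$; this is the content of Lemma~\ref{lemma_tecnico} and Corollary~\ref{lemma_qo}) and a density point of the measure $\nu:=H(\theta)\,\Haus^m\trace E$ (so that $H(\theta(x_i))\,\omega_m r_i^m\le(1+\e)\,\nu(B_i)$). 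Each disk $S_i$ carries the \emph{exact} multiplicity $\theta(x_i)$ and is then approximated by inscribed simplices with that same multiplicity; the disks being disjoint, no overlap ever occurs. The comparison $\Phi_H(P)\le(1+\e)\MM(R)$ thus comes directly from the Lebesgue-point property of $H\circ\theta\in L^1$, which is precisely the measure-theoretic counterpart of the ``Lusin on $H\circ\theta$'' step you were missing.
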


Theorem \ref{cor:F=G} characterizes the lower semicontinuous envelope $F_{H}$ on rectifiable currents to be the (possibly infinite) $H$-mass $\MM$. Without further assumptions on $H$, the lower semicontinuous envelope $F_{H}$ can have finite values on flat chains which are non-rectifiable (for instance, the choice $H(\theta):=|\theta|$ induces the mass functional $F_H= \Mass$). If instead we add the natural hypothesis that $H$ is monotone non-decreasing on $\left[ 0, \infty \right)$, then there is a simple necessary and sufficient condition which prevents this to happen in the case of flat chains with finite mass, thus allowing us to obtain an explicit representation for $F_H$ on all flat chains with finite mass.

\begin{proposition}\label{thm:normal}
Let $H$ be as in Assumption \ref{hp:funzione_H} and monotone non-decreasing on $\left[ 0, \infty \right)$. The condition
\begin{equation}\label{ratio}
\lim_{\theta \searrow 0^{+}} \frac{H(\theta)}{\theta}=+\infty.
\end{equation}
holds if and only if 
\begin{equation}\label{non-rect}
F_H(T)=\begin{cases}
\MM(T) &\mbox{for }T \in \Rc_m(\R^n),
\\
+\infty  & \mbox{for } T \in (\F_m(\R^n)\cap  \lbrace T \in \mathscr{E}_{m}(\R^{n}) \, \colon \, \Mass(T) < \infty \rbrace) \setminus \Rc_m(\R^n).
\end{cases}
\end{equation}
\end{proposition}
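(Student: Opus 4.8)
The plan is to prove the two implications separately, leaning on Theorem~\ref{cor:F=G}, which already gives $F_H \equiv \MM$ on $\Rc_m(\R^n)$ irrespective of \eqref{ratio}; thus in each direction the only substantive point is the value of $F_H$ on non-rectifiable flat chains of finite mass. For \eqref{non-rect}$\,\Rightarrow\,$\eqref{ratio} I would argue contrapositively. If $\liminf_{\theta\searrow 0}H(\theta)/\theta<\infty$, pick $\theta_k\searrow0$ with $H(\theta_k)\le L\theta_k$ and, after refining so that $\theta_{k+1}\in[\theta_k/2,\theta_k)$, deduce from monotonicity that $H(\theta)\le 2L\theta$ for all small $\theta>0$. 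I would then display a non-rectifiable flat chain with finite mass and finite $F_H$: assuming $n>m$, consider in $\R^{m+1}\subseteq\R^n$ the $m$-current $T$ defined by $\langle T,\omega\rangle=\int_{[0,1]^{m+1}}\langle\omega(x),e_1\wedge\cdots\wedge e_m\rangle\,d\mathcal{L}^{m+1}(x)$. One checks $\Mass(T)=1$ and $\Mass(\partial T)\le 2m<\infty$, so $T$ is normal and hence $T\in\F_m(\R^n)$; moreover $T\notin\Rc_m(\R^n)$, since $\|T\|=\mathcal{L}^{m+1}\trace[0,1]^{m+1}$ is not $\sigma$-finite with respect to $\Haus^m$. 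The thin parallel slices $P_j:=\tfrac1j\sum_{k=1}^{j}\llbracket[0,1]^m\times\{k/j\}\rrbracket\in\mathbf P_m(\R^n)$ satisfy $\Mass(P_j)=1$, $\Mass(\partial P_j)=2m$, and converge weakly$^*$ to $T$; by the compactness theorem for normal currents the convergence is then in flat norm. Since for $j$ large every coefficient of $P_j$ equals the small number $1/j$, we get $\Phi_H(P_j)=j\,H(1/j)\le 2L$, whence $F_H(T)\le 2L<\infty$, contradicting \eqref{non-rect}.

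For \eqref{ratio}$\,\Rightarrow\,$\eqref{non-rect} it is enough to show that $T\in\F_m(\R^n)$ with $\Mass(T)<\infty$ and $F_H(T)<\infty$ is rectifiable. Fix polyhedral $P_j$ with $\Flat(T-P_j)\to0$ and $\sup_j\Phi_H(P_j)=:C<\infty$. The core idea is, for each $s>0$, to split $P_j=Q_j^s+R_j^s$ according to whether a simplex carries multiplicity of modulus $<s$ or $\ge s$. Monotonicity and \eqref{ratio} force $H(s)>0$, so the high-multiplicity part has uniformly bounded size, $\Haus^m(\spt R_j^s)\le C/H(s)$; and, setting $\lambda(s):=\inf_{0<|\theta|\le s}H(\theta)/|\theta|$, assumption \eqref{ratio} together with subadditivity gives $\lambda(s)\ge H(s)/(2s)\to+\infty$ as $s\searrow0$ and bounds the low-multiplicity part in mass, $\Mass(Q_j^s)\le C/\lambda(s)$. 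Testing $T$ against smooth $m$-forms $\omega$ supported away from the closed accumulation set $E^s:=\{x:\text{every neighbourhood of }x\text{ meets }\spt R_j^s\text{ for infinitely many }j\}$, and using $\langle P_j,\omega\rangle\to\langle T,\omega\rangle$ together with $|\langle Q_j^s,\omega\rangle|\le\Mass(Q_j^s)\,\|\omega\|_{C^0}$ and $\langle R_j^s,\omega\rangle=0$ whenever $\spt\omega\cap\spt R_j^s=\emptyset$, one obtains $\|T\|(\R^n\setminus E^s)\le C/\lambda(s)$. If one can prove $\Haus^m(E^s)<\infty$ (quantitatively, $\le c_{n,m}\,C/H(s)$), then $Z:=\bigcup_{k\in\N}E^{1/k}$ is $\sigma$-finite with respect to $\Haus^m$ and $\|T\|(\R^n\setminus Z)\le\inf_k C/\lambda(1/k)=0$, so $\|T\|$ is carried by a $\Haus^m$-$\sigma$-finite set; the rectifiability theorem for flat chains of finite mass (as in \cite[\S6]{White1999}) then forces $T$ rectifiable, against $T\notin\Rc_m(\R^n)$.

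The step I expect to be the main obstacle is exactly the bound $\Haus^m(E^s)\le c_{n,m}\,C/H(s)$ — i.e.\ that uniformly bounded $m$-size of the high-multiplicity pieces $R_j^s$ confines the limiting mass to a set of finite $\Haus^m$-measure. For merely weakly$^*$ convergent sequences this is false (a fine grid of short segments transversal to a cube furnishes a bounded-$\Phi_H$, bounded-size approximation of a diffuse chain in the weak$^*$, but not in the flat, topology), and it becomes available only through the weak lower semicontinuity of the size functional for flat convergence. Since the pieces $Q_j^s$, $R_j^s$ need not inherit boundary-mass bounds from $P_j$, making this rigorous seems to require either a preliminary regularization of the approximating sequence (by the deformation theorem, harmless when $\partial T$ has finite mass) or the full machinery for rectifiability of flat chains developed in \cite{White1999}; the allied fact that a flat $m$-chain of finite mass whose mass measure is $\sigma$-finite with respect to $\Haus^m$ is rectifiable would likewise be quoted from that circle of ideas rather than reproved here.
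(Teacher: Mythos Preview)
Your necessity argument is essentially the paper's: the same diffuse current on $[0,1]^{m+1}$ approximated by stacks of parallel $m$-cubes with small constant multiplicity. The extra step you take (using monotonicity to pass from $H(\theta_k)\le L\theta_k$ along a sequence to $H(\theta)\le 2L\theta$ for all small $\theta$, so that the specific multiplicities $1/j$ work) is sound but unnecessary, since one can simply take $N_i=\lceil 1/\theta_i\rceil$ copies with multiplicity~$\theta_i$.

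For sufficiency your route diverges from the paper's, and the obstacle you flag is real: the sets $\spt R_j^s$ have uniformly bounded $\Haus^m$-measure, but without boundary control on the pieces there is no reason the accumulation set $E^s$ should inherit a $\Haus^m$ bound (a sequence of unit segments in $\R^2$ rotating about a point accumulates on a disk). Your proposed fallback---the criterion ``finite-mass flat chain whose mass measure is $\Haus^m$-$\sigma$-finite $\Rightarrow$ rectifiable''---is not a standard statement either, and in any case proving it is not easier than what you are trying to establish. The paper avoids all of this by slicing. It first proves the $m=0$ case directly, and here your own multiplicity-splitting idea is exactly what works and is elementary: the high-multiplicity part of each $R_j$ is supported on at most $C/H(\delta)$ points and (sub)converges to an atomic measure, while the low-multiplicity part has mass $\le C f(\delta)$ with $f(\delta)\to0$; hence the limit is atomic. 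For $m>0$ one uses that flat convergence passes to $\Haus^m$-a.e.\ slice (Federer's slicing inequality for the flat norm) and that the $H$-mass controls the integral of the $H$-mass of the slices; the $m=0$ case then forces $\langle T,p_I,y\rangle$ to be $0$-rectifiable for a.e.\ $y$ and every coordinate projection $p_I$, and White's \emph{rectifiable slices theorem} \cite{white_rect} upgrades this to rectifiability of $T$. So both arguments ultimately appeal to White, but the paper's slicing reduction makes that appeal clean and sidesteps precisely the size-lower-semicontinuity issue you identified.
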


\section{Proof of Proposition \ref{prop:lsc}}\label{s:sci}
This section is devoted to the proof of Proposition \ref{prop:lsc}. It is carried out by slicing the rectifiable currents $T_j$ and $T$ and reducing the proposition to the lower semicontinuity of $0$-dimensional currents. Some of the techniques here adopted are borrowed from \cite[Lemma 3.2.14]{depauwhardt}.

We recall some preliminaries on the slicing of currents. 
%\textcolor{red}{[Per slicing nel caso di correnti in gruppi, guarda White - Rectifiability of flat chains, Sezione 3.]} 
Given $m \leq n$, let $I(n,m)$ be the set of $m$-tuples $(i_1,\ldots,i_m)$ with
$$1\leq i_1<\ldots<i_m\leq n.$$

Let $\{e_1,\ldots,e_n\}$ be an orthonormal basis of $\R^n$. For any $I=(i_1,\ldots,i_m)\in I(n,m)$, let $V_{I}$ be the $m$-plane spanned by $\{e_{i_1},\ldots,e_{i_m}\}$. Given an $m$-plane $V$, we will denote $p_V$ the orthogonal projection onto $V$. If $V=V_I$ for some $I$, we write $p_I$ in place of $p_{V_I}$. Given a current $T\in \F_{m}(\R^{n})$, a Lipschitz function $f:\R^n\to\R^k$ for some $k\leq m$ and $y\in\R^k$, we denote by $\langle T,f,y\rangle$ the $(m-k)$-dimensional \emph{slice} of $T$ in $f^{-1}(y)$ (see \cite[Section 4.3]{FedererBOOK}). Intuitively, this can be thought as the ``intersection'' of the current $T$ with the level set $f^{-1}(y)$.

Let us denote by $Gr(n,m)$ the Grassmannian of $m$-dimensional planes in $\R^n$, and by $\gamma_{n,m}$ the Haar measure on $Gr(n,m)$ (see \cite[Section 2.1.4]{KrantzParks}).

In the following lemma, we prove a version of the integral-geometric equality for the $H$-mass, which is a consequence of \cite[3.2.26;
2.10.15]{FedererBOOK} (see also \cite[(21)]{depauwhardt}). We observe that the hypotheses $(H2)$ and $(H3)$ on the function $H$ are not needed here, and indeed Lemma \ref{maestosa-integral-geo} below is valid for any Borel function $H$ for which the $H$-mass $\MM$ is well defined.

\begin{lemma}\label{maestosa-integral-geo}
Let $E \subseteq \R^n$ be $m$-rectifiable. Then there exists $c=c(n,m)$ such that the following integral-geometric equality holds:
\begin{equation}
\label{eqn:quel-che-dice-Fed}
\Haus^m(E) = c\int_{Gr(n,m)}\int_{\R^m} \Haus^{0}( p_{V}^{-1}(\{y\}) \cap E)   \, d\Haus^m(y)\, d\gamma_{n,m}(V).
\end{equation} 
In particular, if $R \in \Rc_{m}(\R^{n})$, 
\begin{equation}\label{e:int_geom}
\MM(R)=c\int_{Gr(n,m)\times\R^m}\MM \big(\langle R,p_V,y\rangle \big) d(\gamma_{n,m}\otimes\Haus^m)(V,y).
\end{equation}
\end{lemma}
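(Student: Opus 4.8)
The plan is to derive both \eqref{eqn:quel-che-dice-Fed} and \eqref{e:int_geom} from a single application of the area formula to the orthogonal projections $p_{V}\colon\R^{n}\to V$ restricted to $E$, combined with the rotational invariance of the Haar measure $\gamma_{n,m}$. The constant $c=c(n,m)$ will appear as the reciprocal of $\int_{Gr(n,m)} J_{V}(x)\,d\gamma_{n,m}(V)$, where $J_{V}(x)$ is the $m$-dimensional Jacobian of the linear map $p_{V}$ restricted to the approximate tangent plane $\mathrm{Tan}(E,x)$. Since $J_{V}(x)$ depends on $x$ only through the plane $\mathrm{Tan}(E,x)$, and $O(n)$ acts transitively on $Gr(n,m)$ leaving $\gamma_{n,m}$ invariant, this integral is in fact a constant; call it $1/c$.

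The first step is to fix, in a $\gamma_{n,m}$-measurable fashion, an isometry $V\cong\R^{m}$ for each $V\in Gr(n,m)$, so that $p_{V}$ may be regarded as a Lipschitz map $\R^{n}\to\R^{m}$ under which $\Haus^{m}\trace V$ is Lebesgue measure. By the area formula for rectifiable sets (cf. \cite[3.2.22]{FedererBOOK}), for $\gamma_{n,m}$-a.e.\ $V$ one has $J_{V}(x)>0$ for $\Haus^{m}$-a.e.\ $x\in E$, the fibre $p_{V}^{-1}(\{y\})\cap E$ is at most countable for $\Haus^{m}$-a.e.\ $y$, and for every non-negative Borel $g$ on $E$ (possibly with infinite integral)
\[
\int_{\R^{m}}\Big(\sum_{x\in p_{V}^{-1}(\{y\})\cap E} g(x)\Big)\,d\Haus^{m}(y)=\int_{E} g(x)\,J_{V}(x)\,d\Haus^{m}(x).
\]
Integrating over $V$ against $\gamma_{n,m}$, applying Tonelli's theorem, and using that $\int_{Gr(n,m)} J_{V}(x)\,d\gamma_{n,m}(V)=1/c$ is constant, one gets
\[
\int_{Gr(n,m)}\int_{\R^{m}}\Big(\sum_{x\in p_{V}^{-1}(\{y\})\cap E} g(x)\Big)\,d\Haus^{m}(y)\,d\gamma_{n,m}(V)=\frac{1}{c}\int_{E} g\,d\Haus^{m}.
\]
Taking $g\equiv 1$ yields \eqref{eqn:quel-che-dice-Fed} (alternatively this identity can be quoted directly from \cite[3.2.26, 2.10.15]{FedererBOOK}, see also \cite[(21)]{depauwhardt}).

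For \eqref{e:int_geom} I would specialize to $g=H\circ\theta$, after identifying the slices. Writing $R=\llbracket E,\tau,\theta\rrbracket$, the structure theory of slices of rectifiable currents (see \cite[Section 4.3]{FedererBOOK}) gives that, for $\gamma_{n,m}\otimes\Haus^{m}$-a.e.\ $(V,y)$, the slice $\langle R,p_{V},y\rangle$ is the rectifiable $0$-current carried by $p_{V}^{-1}(\{y\})\cap E$ whose multiplicity at $x$ has absolute value $\theta(x)$ (the orientation of $x$ being a sign recording the compatibility of $\tau(x)$ with $V$), so that
\[
\MM\big(\langle R,p_{V},y\rangle\big)=\sum_{x\in p_{V}^{-1}(\{y\})\cap E} H(\theta(x)).
\]
Here no evenness of $H$ is needed, since the numbers entering $\MM$ are the positive multiplicities $\theta(x)$; moreover the right-hand side is a measurable function of $(V,y)$ by the area formula recalled above, so the left-hand side of \eqref{e:int_geom} is well posed. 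Plugging $g=H\circ\theta$ into the displayed identity then gives
\[
\int_{Gr(n,m)\times\R^{m}}\MM\big(\langle R,p_{V},y\rangle\big)\,d(\gamma_{n,m}\otimes\Haus^{m})(V,y)=\frac{1}{c}\int_{E} H(\theta)\,d\Haus^{m}=\frac{1}{c}\,\MM(R),
\]
which is exactly \eqref{e:int_geom}.

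The step requiring the most care is the identification of $\langle R,p_{V},y\rangle$ with the signed sum of Dirac masses over the fibre for $\gamma_{n,m}\otimes\Haus^{m}$-a.e.\ $(V,y)$: one must first discard the $\gamma_{n,m}$-null set of directions $V$ for which $p_{V}|_{E}$ degenerates on a set of positive $\Haus^{m}$-measure, and then, for each remaining $V$, the $\Haus^{m}$-null set of levels $y$ at which the slice fails to have this standard form. All of this is handled by the interplay of the slicing theory with the area formula; everything else is a routine bookkeeping use of Tonelli's theorem and of the $O(n)$-invariance of $\gamma_{n,m}$.
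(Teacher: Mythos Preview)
Your proof is correct and follows essentially the same route as the paper's: both establish the weighted integral-geometric identity $\int_{E} g\,d\Haus^{m} = c\int_{Gr(n,m)}\int_{\R^{m}} \sum_{x\in p_{V}^{-1}(\{y\})\cap E} g(x)\,d\Haus^{m}(y)\,d\gamma_{n,m}(V)$, then identify $\langle R,p_{V},y\rangle$ as a sum of signed Dirac masses on the fibre with multiplicities $\theta(x)$, and finally specialise to $g=H\circ\theta$. The only difference is that you derive the weighted identity directly from the area formula together with the $O(n)$-invariance of $\gamma_{n,m}$ (which makes the origin of the constant $c$ explicit), whereas the paper quotes \eqref{eqn:quel-che-dice-Fed} from \cite[3.2.26; 2.10.15]{FedererBOOK} and extends it from indicator functions to arbitrary $g$ by linearity and monotone approximation; this is a presentational variation rather than a different argument.
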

\begin{proof}
The equality \eqref{eqn:quel-che-dice-Fed} is proved in \cite[3.2.26; 2.10.15]{FedererBOOK}.
For any Borel set $A \subset \R^{n}$, denoting $f = \mathbf{1}_{A}$, \eqref{eqn:quel-che-dice-Fed} implies that
\begin{equation}\label{e:int_geom-f}
\int_{E} f(x)\,d\Haus^m(x) = c\int_{Gr(n,m)}\int_{\R^m} \int_{E} f(x) \, \mathbf{1}_{  p_{V}^{-1}(\{y\})}(x) \, d\Haus^0(x)  \, d\Haus^m(y)\, d\gamma_{n,m}(V).
\end{equation}
Since the previous equality is linear in $f$, it holds also when $f$ is piecewise constant. Since the measure $\Haus^m \trace E$ is $\sigma$-finite, the equality can be extended to any measurable function $f\in L^1(\Haus^{m} \trace E)$. The case $f \notin L^{1}(\Haus^{m} \trace E)$ follows from the Monotone Convergence Theorem via a simple truncation argument.

Taking $R= \llbracket E, \theta, \tau \rrbracket$, and applying \eqref{e:int_geom-f} with $f(x)= H(\theta(x))$, we deduce that 
$$\MM(R) =% \int_{E} f(x)\,d\Haus^m(x) =
 c\int_{Gr(n,m)}\int_{\R^m} \int_{E\cap  p_{V}^{-1}(\{y\})} H(\theta(x)) \, d\Haus^0(x)  \, d\Haus^m(y)\, d\gamma_{n,m}(V).
$$
We observe that the right-hand side coincides with the right-hand side in \eqref{e:int_geom}
since for $\Haus^{m}$-a.e. $y \in \R^{m}$ the $0$-dimensional current $\langle R,p_V,y\rangle $ is concentrated on the set $E \cap p_{V}^{-1}(y)$ and  
 its density at any $x\in E \cap p_{V}^{-1}(y)$ is $\theta(x)$.
\end{proof}

We prove the lower semicontinuity in \eqref{eq:lsc} by an explicit computation in the case $m=0$. Then, by slicing, we get the proof for $m > 0$, too.
\begin{proof}[Proof of Proposition \ref{prop:lsc}]
	{\it Step 1: the case $m=0$.} Let $T_{j} := \llbracket E_{j}, \tau_{j}, \theta_{j}\rrbracket, T := \llbracket E, \tau, \theta \rrbracket \in \Rc_{0}(\R^{n})$ be such that $\Flat(T - T_{j}) \searrow 0$ as $j \to \infty$. Since $T \trace A$ is a signed, atomic measure, we write
	$$T \trace A = \sum_{i\in \N} \tau(x_{i}) \theta(x_{i}) \delta_{x_i}$$
	for distinct points $\{x_i\}_{i\in \N}  \subseteq E \cap A$, orientations $\tau(x_{i}) \in \{-1, 1 \}$, and for $\theta(x_{i}) > 0$.
	Fix $\e>0$ and let $N = N(\e) \in \mathbb{N}$ be such that
\begin{equation}
	\label{eqn:finite}
		\MM(T \trace A)-  \sum_{i = 1}^{N} H(\theta(x_{i})) \leq \e \qquad \mbox{if }\MM(T \trace A)<\infty
\end{equation}
and 
\begin{equation}
\label{eqn:finite-infin}
 \sum_{i = 1}^{N} H(\theta(x_{i})) \geq \frac 1\e \qquad \mbox{otherwise}.
\end{equation}
Since $H$ is positive, even, and lower semicontinuous, for every $i \in \{1,\dots,N\}$ it is possible to determine $\eta_{i} = \eta_{i}(\e, \theta(x_{i})) > 0$ such that
	\begin{equation} \label{semi-cont}
	H(\theta) \geq (1 - \e) H(\theta(x_{i})) \quad \mbox{ for every } |\theta - \tau(x_{i}) \theta(x_{i})| < \eta_{i}.
	\end{equation}
Moreover, for every $i \in \{1,\dots,N\}$ there exists $0 < r_{i} < \min\{{\rm dist}(x_{i}, \partial A), 1\}$ such that the balls $B(x_{i}, r_{i})$ are pairwise disjoint, and moreover such that for every $\rho \leq r_{i}$ it holds
\begin{equation} \label{conv_molteplic1}
\left|\tau(x_{i})\theta(x_{i}) - \sum_{x \in E \cap B(x_{i}, \rho)} \tau(x) \theta(x) \right| \leq \frac{\eta_{i}}{2}.
\end{equation}
Our next aim is to prove that in sufficiently small balls and for $j$ large enough, the sum of the multiplicities of $T_j$ (with sign) is close to the sum of the multiplicities of $T$. In order to do this, we would like to test the current $T-T_j$ with the indicator function of each ball. Since this test is not admissible, we have to consider a smooth and compactly supported extension of it outside the ball, provided we can prove that the flat convergence of $T_j$ to $T$ localizes to the ball. From this, our claimed convergence of the signed multiplicities follows by the characterization of the flat norm in \eqref{altra-flat}.

 To make this formal, we define $\eta_{0} := \min_{1 \leq i \leq N} \eta_{i}$ and $r_{0} := \min_{1 \leq i \leq N} r_{i}$. Let $j_{0}$ be such that 
\[
\Flat(T - T_{j}) \leq \frac{\eta_{0} r_{0}}{16} \quad \mbox{ for every } j \geq j_{0}.
\]	
By the definition \eqref{flat_corr} of flat norm, there exist $R_{j} \in \mathscr{E}_{0}(\R^{n})$, $S_{j} \in \mathscr{E}_{1}(\R^{n})$ such that $T - T_{j} = R_{j} + \partial S_{j}$ with $\Mass(R_{j}) + \Mass(S_{j}) \leq \frac{\eta_{0} r_{0}}{8}$ for every $j \geq j_{0}$.  Observe that the mass and the mass of the boundary of both $R_{j}$ and $S_{j}$ are finite, and thus by \cite[4.1.12]{FedererBOOK} it holds $R_{j} \in \F_{0}(\R^{n})$ and $S_{j} \in \F_{1}(\R^{n})$. We want to deduce that for every $i \in \{ 1,\dots,N \}$ there exists $\rho_{i} \in \left( \frac{r_{0}}{2}, r_{0} \right)$ such that 
\[
\Flat((T - T_{j}) \trace B(x_{i}, \rho_{i})) \leq \frac{\eta_{0}}{2}.
\] 
Indeed, for any fixed $i \in \{ 1,\dots,N\}$ one has that for a.e. $\rho \in \left( \frac{r_{0}}{2}, r_{0} \right)$
\begin{equation} \label{localized_flat}
\begin{split}
(T - T_{j}) \trace B(x_{i}, \rho) &= R_{j} \trace B(x_{i}, \rho) + (\partial S_{j}) \trace B(x_{i}, \rho) \\
&= R_{j} \trace B(x_{i}, \rho) - \langle S_{j}, {\rm d}(x_{i}, \cdot), \rho \rangle + \partial\left( S_{j} \trace B(x_{i},\rho) \right),
\end{split}
\end{equation}
where ${\rm d}(x_{i},z) := |x_{i} - z|$ and where the last identity holds by the definition of slicing for currents with finite mass and finite mass of the boundary, the so called \emph{normal currents} (cf. \cite[4.2.1]{FedererBOOK}). On the other hand, by \cite[4.2.1]{FedererBOOK} we have
\[
\int_{\frac{r_{0}}{2}}^{r_{0}} \Mass(\langle S_{j}, {\rm d}(x_{i}, \cdot), \rho \rangle) \, d\rho \leq \Mass(S_{j} \trace (B(x_{i}, r_0) \setminus B(x_{i}, \frac{r_{0}}{2}))) \leq \frac{\eta_{0}r_{0}}{8}.
\]	
Hence, there exists $\rho_{i} \in \left( \frac{r_{0}}{2}, r_{0} \right)$ such that 
\begin{equation} \label{massa_slice}
\Mass(\langle S_{j}, {\rm d}(x_{i}, \cdot), \rho_{i} \rangle) \leq \frac{\eta_{0}}{4}.
\end{equation}
We conclude from \eqref{localized_flat} that 
\begin{equation} \label{localized_flat2}
\begin{split}
\Flat((T - T_{j}) \trace B(x_{i}, \rho_{i})) &\leq \Mass(R_{j} \trace B(x_{i}, \rho_{i})) + \Mass(\langle S_{j}, {\rm d}(x_{i}, \cdot), \rho_{i} \rangle) + \Mass( S_{j} \trace B(x_{i}, \rho_{i})) \\
&\overset{\eqref{massa_slice}}{\leq} \frac{\eta_{0} r_{0}}{4} + \frac{\eta_{0}}{4} \leq \frac{\eta_{0}}{2}. 
\end{split}
\end{equation} 	
	
Using the characterization of the flat norm in \eqref{altra-flat}, and testing the currents $(T - T_{j}) \trace B(x_{i}, \rho_{i})$ with any smooth and compactly supported function $\phi_{i} \colon \R^{n} \to \R$ which is identically $1$ on $B(x_{i}, \rho_{i})$, we obtain
\begin{equation} \label{conv_molteplic2}
\left| \sum_{x \in E \cap B(x_{i}, \rho_{i})} \tau(x) \theta(x) - \sum_{y \in E_{j} \cap B(x_{i}, \rho_{i})} \tau_{j}(y) \theta_{j}(y) \right| \leq \frac{\eta_{0}}{2}.
\end{equation}
	
Combining \eqref{conv_molteplic2} with \eqref{conv_molteplic1}, we deduce by triangle inequality that
\begin{equation} \label{conv_molt_fin}
\left| \tau(x_{i}) \theta(x_{i}) - \sum_{y \in E_{j} \cap B(x_{i}, \rho_{i})} \tau_{j}(y) \theta_{j}(y) \right| \leq \eta_{i}.
\end{equation}	

Finally, using \eqref{semi-cont} and the fact that $H$ is countably subadditive (cf. Remark \ref{numerabile_subadd}), we conclude that for every $j \geq j_{0}$
\[
\begin{split}
H(\theta(x_{i})) &\leq \frac{1}{1 - \e} H\left( \sum_{y \in E_{j} \cap B(x_{i}, \rho_{i})} \tau_{j}(y) \theta_{j}(y) \right) \\
&\leq \frac{1}{1 - \e} \sum_{y \in E_{j} \cap B(x_{i}, \rho_{i})} H(\theta_{j}(y)) \\
&= \frac{1}{1 - \e} \MM(T_{j} \trace B(x_{i}, \rho_{i})).
\end{split}
\]

	Summing over $i$, since the balls $B(x_i,\rho_{i})$ are pairwise disjoint, we get that 
	$$ \sum_{i = 1}^{N} H(\theta_i) \leq \frac{1}{1-\e} \liminf_{j \to \infty} \sum_{i=1}^N\MM(T_j\trace  B(x_i,\rho_{i})) \leq \frac{1}{1-\e} \liminf_{j \to \infty} \MM(T_j \trace A).
	$$
	By \eqref{eqn:finite} (or \eqref{eqn:finite-infin} in the case that $\MM(T \trace A)=\infty$) and since $\e$ is arbitrary, we find \eqref{eq:lsc}.\\

	{\it Step 2 (Reduction to $m=0$ through integral-geometric equality).} We prove now  Proposition~\ref{prop:lsc} for $m > 0$.
	Up to subsequences, we can assume
	$$\lim_{j \to \infty} \MM (T_j \trace A)= \liminf_{j \to \infty} \MM (T_j \trace A) .$$
	
By \cite[4.3.1]{FedererBOOK}, for every $ V \in Gr(n,m)$ it holds
\begin{equation}
\label{eqn:flat-slices}
\int_{\R^m} \Flat(\langle T_{j} - T, p_V, y\rangle) \, dy \leq \Flat (T_{j} - T),
\end{equation}
	 
	Integrating the inequality \eqref{eqn:flat-slices} in $V\in Gr(n,m)$ and using that $\gamma_{n,m}$ is a probability measure on $Gr(n,m)$  we get
	$$\lim_{j \to \infty} \int_{Gr(n,m)\times \R^m}\Flat(\langle T_j-T,p_V,y\rangle) d(\gamma_{n,m} \otimes \Haus^m)(V,y) \leq \lim_{j \to \infty} \Flat(T_j-T) = 0.$$
	Since the integrand $\Flat(\langle T_j-T,p_V,y\rangle)$ is converging to $0$ in $L^1$, up to subsequences, we get
	$$\lim_{j \to \infty}\Flat(\langle T_j-T,p_V,y\rangle)  = 0 \qquad 
	\mbox{for $\gamma_{n,m} \otimes \Haus ^m$-a.e. $(V, y) \in Gr(n,m)\times\R^m$}.$$
	We conclude from Step 1 that
\begin{equation} \label{step1}	
	\MM(\langle T,p_{V},y\rangle \trace A) \leq  \liminf_{j \to \infty} \MM (\langle T_j,p_{V},y\rangle \trace A) \qquad 
	\mbox{for $\gamma_{n,m} \otimes \Haus ^m$-a.e. $(V, y) \in Gr(n,m)\times\R^m$}.
\end{equation}

By \cite[(5.15)]{Ambrosio2000}, for every $V \in Gr(n,m)$ one has $\langle T, p_{V}, y \rangle \trace A = \langle T \trace A, p_{V}, y \rangle$ for $\Haus^{m}$-a.e. $y \in \R^{m}$.
	
	In order to conclude, we apply twice the integral-geometric equality \eqref{e:int_geom}. Indeed, using \eqref{step1} and Fatou's lemma, we get
	\begin{equation}
	\begin{split}
	\MM(T \trace A)&=c\int_{Gr(n,m)\times\R^m}\MM\big(\langle T \trace A,p_V,y\rangle\big) d(\gamma_{n,m}\otimes\Haus^m)(V,y)\\
	&\leq c\int_{Gr(n,m)\times\R^m}\liminf_{j\to \infty}\MM \big(\langle T_j \trace A ,p_V,y\rangle \big) d(\gamma_{n,m}\otimes\Haus^m)(V,y)\\
	&\leq c\liminf_{n\to \infty} \int_{Gr(n,m)\times\R^m}\MM \big(\langle T_j \trace A ,p_V,y\rangle \big) d(\gamma_{n,m}\otimes\Haus^m)(V,y)\\
	&=\liminf_{j\to \infty}\MM(T_j \trace A).
	\end{split}
	\end{equation}
This concludes the proof of Step 2, so the proof of Proposition~\ref{prop:lsc} is complete.
\end{proof}

\section{Proof of Proposition \ref{thm:poly_approx}}
In order to prove the proposition, we will consider a family of pairwise disjoint balls which contain the entire mass of the current $R$, up to a small error. Then, we replace in any of these balls the current $R$ with an $m$-dimensional disc with constant multiplicity. Afterwards, we further approximate each disc with polyhedral chains.

%The strategy of proof consists in considering a family of balls which contain the mass of the current covering the current $R$, up to a small error, with suitable balls, and replacing in any of them the current $R$ with a $m$-dimensional disc with constant multiplicity. Afterwards, we further approximate each disc with polyhedral chains.

We begin with the following lemma, where we prove that, at many points $x$ in the $m$-rectifiable set supporting the current $R$ and at sufficiently small scales (depending on the point), $R$ is close in the flat norm to the tangent $m$-plane at $x$ weighted with the multiplicity of $R$ at $x$. 

In this section, given the $m$-current $R = \llbracket E, \tau, \theta \rrbracket$, for a.e. $x\in E$ we denote with $\pi_{x}$ the affine $m$-plane through $x$ spanned by the (simple) $m$-vector $\tau(x)$ and with $S_{x,\rho}$ the $m$-current
$$S_{x,\rho} := \llbracket B(x,\rho) \cap \pi_{x}, \tau(x), \theta(x) \rrbracket.$$

\begin{lemma} \label{lemma_tecnico}
Let $\e > 0$, and let $R = \llbracket E, \tau, \theta \rrbracket$ be a rectifiable $m$-current in $\R^{n}$. There exists a subset $E' \subset E$ such that the following holds:
\begin{itemize}

\item[$(i)$] $\Mass(R \trace (E \setminus E')) \leq \e$;

\item[$(ii)$] for every $x \in E'$ there exists $r = r(x) > 0$ such that for any $0 < \rho \leq r$
\begin{equation}
\label{eqn:flat-blow-up}
\Flat(R \trace (E' \cap B(x,\rho)) - S_{x,\rho}) \leq \e \Mass(R \trace B(x,\rho)).
\end{equation}
\end{itemize}
\end{lemma}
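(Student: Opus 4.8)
The plan is to exploit two classical facts about rectifiable currents: the existence of approximate tangent planes at $\Haus^m$-a.e. point of $E$, and the fact that at such points $R$ has an approximate tangent plane \emph{with multiplicity}, meaning that the rescaled currents $(R - x)_{0,\rho}$ (blow-ups at $x$ at scale $\rho$) converge, as $\rho \searrow 0$, to the constant-multiplicity plane $\theta(x) \llbracket \mathrm{Tan}(E,x) \rrbracket$ in the flat norm (equivalently, weakly as currents, since everything is localized in a fixed ball and the masses converge). First I would recall/invoke this blow-up statement (e.g.\ from \cite[Section 4.3]{FedererBOOK} or the treatment of the approximate tangent plane for rectifiable currents): for $\Haus^m$-a.e.\ $x\in E$, setting $\eta_{x,\rho}(z) := (z-x)/\rho$, one has $\Mass\big((\eta_{x,\rho})_\# (R\trace B(x,\rho))\big)/\omega_m \to \theta(x)$ and the rescaled currents converge to $\theta(x)\llbracket \mathrm{Tan}(E,x)\cap B(0,1)\rrbracket$. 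Rescaling back and using that the flat norm scales like $\rho^{m}$ on $m$-currents supported in $B(x,\rho)$ (with the $(m+1)$-dimensional filling scaling like $\rho^{m+1}$, hence negligible after normalization), this gives: for a.e.\ $x$,
\[
\frac{\Flat\big(R\trace B(x,\rho) - S_{x,\rho}\big)}{\rho^m} \xrightarrow[\rho\searrow 0]{} 0 .
\]
Moreover the density lower bound $\Mass(R\trace B(x,\rho)) \geq c\,\theta(x)\,\rho^m$ holds for small $\rho$ (again from the blow-up, since $\theta(x)>0$), so the right-hand side of \eqref{eqn:flat-blow-up} is comparable to $\rho^m$ and the ratio $\Flat(\cdots)/\Mass(R\trace B(x,\rho))$ also tends to $0$.

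Next I would deal with the replacement of $R\trace B(x,\rho)$ by $R\trace (E'\cap B(x,\rho))$ inside \eqref{eqn:flat-blow-up}. The difference is $R\trace((E\setminus E')\cap B(x,\rho))$, which has mass at most $\Mass(R\trace(E\setminus E'))$, hence flat norm at most that quantity; so one wants $E'$ to carry almost all the mass \emph{and} to be chosen so that, near $\Haus^m$-a.e.\ point of $E'$, the escaping mass $\Mass(R\trace((E\setminus E')\cap B(x,\rho)))$ is small compared to $\Mass(R\trace B(x,\rho))$ at small scales. This is exactly a Lebesgue-density statement: by the Besicovitch/Lebesgue differentiation theorem for the measure $\mu := \Haus^m\trace(E,\theta)$ (more precisely, density of the set $E\setminus E'$ with respect to $\mu$), for $\mu$-a.e.\ $x\in E'$ one has $\mu((E\setminus E')\cap B(x,\rho))/\mu(B(x,\rho)) \to 0$. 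So the construction is: use Egorov/inner regularity to pick a (relatively) closed set $E'\subset E$ with $\Mass(R\trace(E\setminus E'))\le\e$ on which the convergence in the blow-up statement above is \emph{uniform} (this is the Severini–Egorov step applied to the family of functions $\rho\mapsto \Flat(R\trace B(x,\rho)-S_{x,\rho})/\Mass(R\trace B(x,\rho))$, indexed by $x$), and simultaneously restrict to the set of $\mu$-density-one points of $E'$; then for $x\in E'$ and $\rho\le r(x)$ small enough, combine the three small terms (blow-up error, escaping-mass error, and the comparison of $\Mass(R\trace(E'\cap B(x,\rho)))$ with $\Mass(R\trace B(x,\rho))$) to get \eqref{eqn:flat-blow-up} with a constant multiple of $\e$; rename $\e$.

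The step I expect to be the genuine obstacle is the \emph{flat-norm blow-up with multiplicity}: one must ensure that not only does $R\trace B(x,\rho)$, rescaled, converge weakly to the plane, but that this convergence is quantitative in flat norm and, crucially, that it can be made locally uniform on a large set. Weak convergence of the blow-ups is standard from rectifiability; upgrading to flat-norm convergence on a ball uses that these are currents of finite mass with mass converging (so no concentration), together with the characterization \eqref{altra-flat}; making it uniform on a large set is where one invokes Egorov, and one should be slightly careful that the "exceptional scales" coming from the choice of a good radius $\rho_i$ (as in the proof of Proposition \ref{prop:lsc}, or from slicing the filling current) do not interfere — but here, since we are only asserting existence of \emph{some} $r(x)$ and the estimate is required for \emph{all} $\rho\le r(x)$, one genuinely needs the blow-up ratio to be small for every small $\rho$, not just a.e.\ $\rho$, which is automatic from the convergence statement once phrased as convergence of a function of $\rho$ to $0$. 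The remaining ingredients — scaling of mass and flat norm under homotheties, the density theorem, countable subadditivity being irrelevant here since only $(H1)$ is used (indeed $H$ does not even enter this lemma) — are routine and I would not belabor them.
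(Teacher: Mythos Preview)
Your plan runs the paper's logic in reverse: the paper proves Lemma~\ref{lemma_tecnico} directly via a $C^1$-graph decomposition and the homotopy formula, and only \emph{afterwards} deduces the a.e.\ flat-norm blow-up as Corollary~\ref{lemma_qo}. You are proposing to take the flat-norm blow-up as input and deduce the lemma by Egorov and density. That would be a legitimate alternative if the flat-norm blow-up were independently available, but your justification for it is where the argument breaks. The sentence ``upgrading to flat-norm convergence on a ball uses that these are currents of finite mass with mass converging \dots\ together with the characterization~\eqref{altra-flat}'' is false for $m\ge 1$: the admissible test forms in~\eqref{altra-flat} satisfy only $\|\omega\|_{C^0}\le1$ and $\|d\omega\|_{C^0}\le1$, and since $d\omega$ controls only the antisymmetrized part of $\nabla\omega$, this family is not equicontinuous and Arzel\`a--Ascoli does not apply. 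Concretely, in $\R^2$ take $T=\llbracket[0,1]\times\{0\},e_1,1\rrbracket$ and $T_j=\llbracket[0,1]\times\{0\},e_1,1+\tfrac12\sin(jx_1)\rrbracket$: then $T_j\to T$ weakly (Riemann--Lebesgue) and $\Mass(T_j)\to\Mass(T)$, yet testing against $\omega_j=\sin(jx_1)\,dx_1$ (which has $d\omega_j=0$) gives $\Flat(T_j-T)\ge \tfrac12\int_0^1\sin^2(jx_1)\,dx_1\to\tfrac14$. There is also no bound on $\Mass(\partial(R\trace B(x,\rho)))$ after rescaling, so the standard weak-equals-flat compactness for normal currents is unavailable. (The reference to \cite[Section~4.3]{FedererBOOK} is to slicing, not to any blow-up statement of this type.)

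What actually proves the flat-norm blow-up is exactly the machinery the paper deploys for the lemma itself: cover $E$ up to $\e$ in mass by finitely many $C^1$ graphs $\Sigma_i=\mathrm{Graph}(f_i)$, let $E'$ be the common Lebesgue points of the finitely many multiplicities $\theta_i$, and at each $x\in E'$ with $i=i(x)$ reparametrize so that $\nabla f_i(x)=0$, use $|\nabla f_i|\le\e$ on a small ball, and apply the homotopy formula between the graph map and the tangential projection to fill $(F_i)_\sharp S_{x,\rho}-S_{x,\rho}$ by an $(m{+}1)$-current of mass $\lesssim\e\,\Mass(R\trace B(x,\rho))$; the contributions from the other $\Sigma_j$ are controlled by the Lebesgue-point condition. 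So the ``genuine obstacle'' you flag is real, and its resolution is not softer than the paper's direct proof. Two smaller remarks: Egorov is unnecessary here since $r(x)$ in (ii) may depend on $x$; and if you did have the a.e.\ flat-norm blow-up, you could take $E'$ to be the full-measure set of good points, making (i) trivial and rendering the density step superfluous as well.
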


\begin{proof}
Since $E$ is countably $m$-rectifiable, there exist countably many linear $m$-dimensional planes $\Pi_{i}$ and $C^{1}$ and globally Lipschitz maps $f_{i} \colon \Pi_{i} \to \Pi_{i}^{\perp}$ such that
\[
E \subset E_{0} \cup \bigcup_{i=1}^{\infty} {\rm Graph}(f_{i}),
\]
with $\Haus^{m}(E_{0}) = 0$. We will denote $\Sigma_{i} := {\rm Graph}(f_{i}) \subset \R^{n}$. For every $x \in \bigcup_{i=1}^{\infty} \Sigma_{i}$, we let $i(x)$ be the first index such that $x \in \Sigma_{i}$. Then, for every $i \geq 1$, we define $R_{i} := \llbracket E \cap \Sigma_{i}, \tau, \theta_{i} \rrbracket$, where 
\begin{equation} \label{multi}
\theta_{i}(x) :=
\begin{cases}
\theta(x) &\mbox{ if $i = i(x) $ } \\
0 &\mbox{ otherwise}.
\end{cases} 
\end{equation}
Clearly, $R = \sum_{i=1}^{\infty} R_{i}$ and $\Mass(R) = \sum_{i=1}^{\infty} \Mass(R_{i})$. Hence, there exists $N = N(\e)$ such that 
\begin{equation} \label{coda_piccola}
\sum_{i\geq N+1} \Mass(R_{i}) \leq \e.
\end{equation}

Now, recall that $x$ is a Lebesgue point of the function $\theta_i$ with respect to the Radon measure $\Haus^m \trace \Sigma_i$ if
\[
\lim_{r \to 0} \frac{1}{\Haus^m(\Sigma_i \cap B(x,r)) }\int_{\Sigma_i \cap B(x,r)} |\theta_i(y)-\theta_i(x)| \, d \Haus^m(y)  = 0
.
\]

We define the set $E' \subset E$ by
\begin{equation} \label{insieme_buono}
\begin{split}
E' := \bigg\lbrace x \in E \cap \bigcup_{i=1}^{N} \Sigma_{i} &\quad \mbox{such that $x$ is a Lebesgue point of $\theta_{i}$} \\
&\mbox{with respect to $\Haus^{m} \trace \Sigma_{i}$ for every $i \in \{1,\dots, N\}$} \bigg\rbrace,
\end{split}
\end{equation}
and we observe that $(i)$ follows from \eqref{coda_piccola} and \cite[Corollary 2.23]{AFP}.

Let us set 
\begin{equation} \label{Lip}
L := \max\lbrace \Lip(f_{i}) \, \colon \, i=1,\dots,N\rbrace.
\end{equation}

Fix $i \in \{1, \dots, N \}$. For every $x \in \Sigma_{i}$ there exists $r > 0$ such that whenever $j \in \{1, \dots, N \}$ is such that $\Sigma_{j} \cap B(x,\sqrt{n} r) \neq \emptyset$, then $x \in \Sigma_{j}$.

Now, fix any point $x \in E'$, and fix an index $j \in \{ 1, \dots, N\}$ such that $x \in \Sigma_{j}$. If $j = i(x)$, then $\theta_{j}(x) = \theta(x) > 0$. Since by the definition of $E'$
\begin{equation} \label{density}
\lim_{r \to 0} \frac{\Mass(R_{j} \trace (\Sigma_{j} \cap B(x,r)))}{\Haus^{m}(\Sigma_{j} \cap B(x,r))} = \theta_{j}(x),
\end{equation}
then there exists $r > 0$ such that for any $0 < \rho \leq \sqrt{n} r$ 
\begin{equation} \label{density2}
\frac{\Mass(R_{j} \trace (\Sigma_{j} \cap B(x,\rho)))}{\Haus^{m}(\Sigma_{j} \cap B(x,\rho))} \geq \frac{\theta_{j}(x)}{2}.
\end{equation}
Again by \cite[Corollary 2.23]{AFP} applied with $\mu = \Haus^{m} \trace \Sigma_{j}$ and $f = \theta_{j}$, there exists a radius $r > 0$ (depending on $x$) such that 
\begin{equation} \label{Lebesgue_pt}
\begin{split}
\int_{\Sigma_{j} \cap B(x,\rho)} |\theta_{j}(y) - \theta_{j}(x)| \, d\Haus^{m}(y) &\leq \e \frac{\theta_{j}(x)}{2} \Haus^{m}(\Sigma_{j} \cap B(x,\rho)) \\
&\leq \e \frac{\Mass(R_{j} \trace (\Sigma_{j} \cap B(x,\rho)))}{\Haus^{m}(\Sigma_{j} \cap B(x,\rho))} \Haus^{m}(\Sigma_{j} \cap B(x,\rho)) \\
&\leq \e \Mass(R_{j} \trace B(x,\rho)),
\end{split}
\end{equation} 
for every $0 < \rho \leq \sqrt{n} r$.

If, instead, $j \neq i(x)$, then $\theta_{j}(x) = 0$ and therefore there exists a radius $r > 0$ (depending on $x$) such that for every $0 < \rho \leq \sqrt{n} r$
\begin{equation} \label{altro_caso}
\begin{split}
\int_{\Sigma_{j} \cap B(x,\rho)} \theta_{j}(y) \, d\Haus^{m}(y) &\leq \frac{\e \theta_{i(x)}(x)}{N(1+L)^{m}} \Haus^{m}(\Sigma_{j} \cap B(x, \rho)) \\
&\leq \frac{\e}{N} \theta_{i(x)}(x) \omega_{m} \rho^{m} \\
&\overset{\eqref{density2}}{\leq} 2 \frac{\e}{N} \Mass(R_{i(x)} \trace B(x, \rho)),
\end{split}
\end{equation}
where $\omega_{m}$ denotes the volume of the unit ball in $\R^{m}$.

Fix any point $x \in E'$ and let $i = i(x)$. By possibly reparametrizing $f_i|_{\Pi_{i} \cap B(x, r)}$ from the $m$-plane tangent to $\Sigma_{i}$ at $x$, translating and tilting such a plane, we can assume that $x = 0$,
 $\Pi_{i} = \lbrace x_{m+1} = \dots = x_{n} = 0 \rbrace$ and $\nabla f_{i}(x) = 0$. By possibly choosing a smaller radius $r = r(x) > 0$, we may also assume that 
\begin{equation} \label{poco_tilt}
|\nabla f_{i}| \leq \e \quad \mbox{in } \Pi_{i} \cap B(x, r).
\end{equation}
With these conventions, the current $S_{x,\rho}$ in the statement reads $S_{x,\rho} = \llbracket B(0,\rho) \cap \Pi_{i}, \tau(0), \theta_{i}(0) \rrbracket$. We let $F_{i} \colon \Pi_{i} \times \Pi_{i}^{\perp} \to \R^{n}$ be given by $F_{i}(z,w) := \left( z, f_{i}(z) \right)$, and we set $\tilde{R}_{i} := (F_{i})_{\sharp} S_{x,\rho} \in \Rc_{m}(\R^{n})$.  

By \eqref{poco_tilt} and the homotopy formula (cf. \cite[26.23]{SimonLN}) applied with $g = F_{i}$ and $f(z,w) := (z,0)$, we have, denoting $C(x,\rho) := (B(x,\rho) \cap \Pi_{i}) \times \Pi_{i}^{\perp}$,
\begin{equation} \label{stima1}
\begin{split}
\Flat(\tilde{R}_{i} - S_{x,\rho}) &\leq C \| g - f \|_{L^{\infty}(C(x, \rho))} \left( \Mass(S_{x, \rho}) + \Mass(\partial S_{x, \rho}) \right) \\
&\leq C \e \rho \left( \Mass(S_{x,\rho}) + \Mass(\partial S_{x,\rho}) \right) \\
&\leq C \e \theta(x) \omega_{m} \rho^{m} \\
&\leq C \e \theta(x) \Haus^{m}(\Sigma_{j} \cap B(x, \rho)) \\
&\overset{\eqref{density2}}{\leq} C \e \Mass(R_{i} \trace B(x,\rho)).
\end{split}
\end{equation}

Now, observe that, if we denote by $\xi_{i}$ the orientation of $\Sigma_{i}$ induced by the orientation of $\Pi_{i} \times \Pi_{i}^{\perp}$ via $F_{i}$, the rectifiable current $\tilde{R}_{i}$ reads $\tilde{R}_{i} = \llbracket \Sigma_{i} \cap C(x, \rho), \xi_{i}, \theta_{i}(x) \rrbracket$ (cf. \cite[27.2]{SimonLN}). Therefore, we can compute
\begin{equation} \label{stima2}
\begin{split}
\Mass(R_{i} \trace B(x,\rho) - \tilde{R}_{i}) &\leq \Mass(R_{i} \trace B(x, \rho) - \tilde{R}_{i} \trace B(x,\rho)) + \Mass(\tilde{R}_{i} \trace (C(x,\rho) \setminus B(x,\rho))) \\
&\overset{\eqref{Lebesgue_pt}}{\leq} \e \Mass(R_{i} \trace B(x,\rho)) + \Mass(\tilde{R}_{i} \trace (C(x,\rho) \setminus B(x,\rho))) \\
&\overset{\eqref{poco_tilt}}{\leq} \e \Mass(R_{i} \trace B(x,\rho)) + C \e \theta_{i}(x) \Haus^{m}(\Sigma_{i} \cap B(x,\rho)) \\
&\overset{\eqref{density2}}{\leq} C \e \Mass(R_{i} \trace B(x,\rho)).
\end{split}
\end{equation}

Hence, we conclude:
\begin{equation} \label{stima_fin}
\begin{split}
\Flat(R \trace E' \cap &B(x,\rho) - S_{x,\rho}) \leq \Flat(R_{i(x)} \trace B(x,\rho) - S_{x,\rho}) + \sum_{\underset{j \neq i(x)}{j=1}}^{N} \Mass(R_{j} \trace B(x,\rho)) \\
&\overset{\eqref{altro_caso}}{\leq} \Flat(R_{i(x)} \trace B(x,\rho) - \tilde{R}_{i} ) + \Flat(\tilde{R}_{i} - S_{x,\rho}) + 2 \e \Mass(R_{i(x)} \trace B(x,\rho)) \\
&\overset{\eqref{stima1}, \eqref{stima2}}{\leq} C \e \Mass(R \trace B(x,\rho)).  
\end{split}
\end{equation}
This proves \eqref{eqn:flat-blow-up}.
\end{proof}

A straightforward iteration argument yields the following corollary.

\begin{corollary} \label{lemma_qo}
Let $R = \llbracket E, \tau, \theta \rrbracket$ be a rectifiable $m$-current in $\R^{n}$. Then, for $\Haus^{m}$-a.e. $x \in E$ 
\begin{equation}\label{limite}
\lim_{r \to 0} \frac{\Flat(R \trace B(x,r) - S_{x,\rho})}{\Mass(R \trace B(x,r))} = 0.
\end{equation}
\end{corollary}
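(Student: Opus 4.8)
The plan is to derive Corollary~\ref{lemma_qo} from Lemma~\ref{lemma_tecnico} by applying the lemma to a sequence of parameters $\e_k \searrow 0$ and extracting a set of full measure on which the pointwise conclusion \eqref{limite} holds. First I would fix, for each $k \in \N$, the set $E'_k \subset E$ produced by Lemma~\ref{lemma_tecnico} applied with $\e = \e_k$; thus $\Mass(R \trace (E \setminus E'_k)) \leq \e_k$, and for every $x \in E'_k$ there is a radius $r_k(x) > 0$ such that
\begin{equation} \label{eq:corproof1}
\Flat(R \trace (E'_k \cap B(x,\rho)) - S_{x,\rho}) \leq \e_k \, \Mass(R \trace B(x,\rho)) \qquad \text{for all } 0 < \rho \leq r_k(x).
\end{equation}
Set $E'' := \bigcup_{k} \bigcap_{l \geq k} E'_l$, i.e.\ the set of points belonging to all but finitely many of the $E'_k$. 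Since $\Mass(R \trace (E \setminus E'_k)) \leq \e_k \to 0$, a Borel--Cantelli type argument (or simply $\Mass(R\trace(E\setminus E''_k)) \le \sum_{l\ge k}\e_l$ after passing to a summable subsequence $\sum_k \e_k < \infty$) shows $\Mass(R \trace (E \setminus E'')) = 0$, so $E''$ has full $\Haus^m$-measure with respect to $R$.

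Next I would upgrade \eqref{eq:corproof1} from the truncated current $R \trace (E'_k \cap B(x,\rho))$ to the full current $R \trace B(x,\rho)$. At a point $x \in E''$ we have $x \in E'_l$ for all $l \geq k_0(x)$; the discrepancy between the two currents is controlled by $\Mass(R \trace ((E \setminus E'_l) \cap B(x,\rho)))$, which is bounded by $\e_l$ but not obviously small relative to $\Mass(R \trace B(x,\rho))$ as $\rho \to 0$. To handle this I would additionally require $x$ to be a point of positive lower density for $R$, i.e.\ $\liminf_{\rho \to 0} \Mass(R \trace B(x,\rho))/\rho^m > 0$; this holds for $\Haus^m$-a.e.\ $x \in E$ because $R$ is rectifiable with $\theta > 0$ (standard density theorems for rectifiable sets/measures, cf.\ \cite[2.83]{AFP} or \cite[Theorem 3.2]{SimonLN}). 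Actually a cleaner route, closer in spirit to the proof of Lemma~\ref{lemma_tecnico}, is to re-run the very same Lebesgue-point construction directly for the full current: on a full-measure set every $x$ is a Lebesgue point of the multiplicity, has an approximate tangent plane, and has density $\theta(x)$, and the estimates \eqref{stima1}--\eqref{stima_fin} then give \eqref{limite} at $x$ with the right-hand side of order $\e_k$ for all small $\rho$ once we also absorb the $E\setminus E'_k$ part using the density lower bound $\Mass(R\trace B(x,\rho)) \ge \tfrac12\theta(x)\omega_m\rho^m$ valid for $\rho$ small.

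Finally, combining these ingredients: for $x$ in the full-measure set $E'' \cap \{x : \text{lower density} > 0\}$ and for each $k$ large, \eqref{eq:corproof1} together with the density lower bound and the mass estimate on $R \trace((E\setminus E'_k)\cap B(x,\rho))$ yields
\begin{equation} \label{eq:corproof2}
\limsup_{\rho \to 0} \frac{\Flat(R \trace B(x,\rho) - S_{x,\rho})}{\Mass(R \trace B(x,\rho))} \leq C \e_k
\end{equation}
for a dimensional constant $C$; letting $k \to \infty$ gives \eqref{limite}. (Note the statement's use of $S_{x,\rho}$ on both occurrences of the limit variable $r$; implicitly $\rho = r$, and I would write the proof with a single radius.) The main obstacle is precisely the passage from the restricted current to the full current, i.e.\ ensuring the ``bad set'' $E \setminus E'_k$ contributes negligibly in each small ball --- this is exactly where the positivity of the lower density of $R$ (a consequence of rectifiability) must be invoked, since Lemma~\ref{lemma_tecnico} alone only controls the bad set's total mass, not its mass inside shrinking balls around a fixed good point.
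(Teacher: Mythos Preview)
Your overall strategy matches the paper's exactly: iterate Lemma~\ref{lemma_tecnico} with a summable sequence $\e_k\searrow 0$, form the $\liminf$-type set $\bigcup_k\bigcap_{l\ge k}E'_l$, and show it has full measure. You also correctly pinpoint the only nontrivial issue: to pass from $R\trace(E'_k\cap B(x,\rho))$ to $R\trace B(x,\rho)$ one must control $\Mass\bigl(R\trace((E\setminus E'_k)\cap B(x,\rho))\bigr)$ \emph{relative to} $\Mass(R\trace B(x,\rho))$, not just globally.

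The gap is in the tool you invoke for this step. The lower density bound $\Mass(R\trace B(x,\rho))\ge\tfrac12\theta(x)\omega_m\rho^m$ only bounds the \emph{denominator} from below; Lemma~\ref{lemma_tecnico} supplies merely the global estimate $\Mass(R\trace(E\setminus E'_k))\le\e_k$, which says nothing about the mass of $E\setminus E'_k$ inside a shrinking ball, so the ratio could in principle blow up as $\rho\to 0$. Re-running the Lebesgue-point argument for the multiplicity $\theta$ does not help either, since $\theta$ carries no information about membership in $E'_k$.

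What the paper does instead is take, for each $k$, the set $F_k\subset E'_k$ of Lebesgue points of the indicator $\mathbf{1}_{E'_k}$ with respect to the measure $\theta\,\Haus^m\trace E$. At such a point $x\in F_k$ one has by definition
\[
\lim_{\rho\to 0}\frac{\Mass\bigl(R\trace((E\setminus E'_k)\cap B(x,\rho))\bigr)}{\Mass(R\trace B(x,\rho))}
=\lim_{\rho\to 0}\frac{\int_{(E\setminus E'_k)\cap B(x,\rho)}\theta\,d\Haus^m}{\int_{E\cap B(x,\rho)}\theta\,d\Haus^m}=0,
\]
which is precisely the localized smallness you need. Since $F_k=E'_k$ up to an $\Haus^m$-null set (\cite[Corollary~2.23]{AFP}), your Borel--Cantelli construction and the rest of your argument go through verbatim once $E'_k$ is replaced by $F_k$.
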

\begin{proof}
For every $i\in \N$ define the set $E_i$ to be the set $E'$ given by Lemma \ref{lemma_tecnico} applied to $R$ with $\e=2^{-i-1}$, and let $F_i\subset E_i$ be the set of Lebesgue points of ${\bf 1}_{E_i}$ (inside $E_i$) with respect to $\theta\Haus^m\trace E$.
By \cite[Corollary 2.23]{AFP}, the set $F_i$ equals the set $E_i$ up to a set of $\Haus^m$-measure $0$ and for every $x\in F_i$ and for $\rho$ sufficiently small (possibly depending on $x$) it holds
\begin{equation*}
\begin{split}
\Mass(R \trace B(x,\rho)- R \trace (E_i \cap B(x,\rho))) &= 
 \int_{(E\setminus E_i)\cap B(x,\rho)} \theta \, d\Haus^m  
 \\&\leq 2^{-i-1} \int_{E\cap B(x,\rho)} \theta\, d\Haus^m = 2^{-i-1} \Mass(R \trace B(x,\rho)).
\end{split}
\end{equation*}
Hence by Lemma \ref{lemma_tecnico} for every $x \in F_i$ there exists $r_i(x)>0$ such that for every $0<\rho< r_i(x)$
\begin{equation*}
\begin{split}
\Flat(R \trace B(x,\rho) - S_{x,\rho})&\leq \Mass(R \trace B(x,\rho) - R \trace (E_i \cap B(x,\rho)))+ \Flat(R \trace (E_i \cap B(x,\rho)) - S_{x,\rho})\\
&\leq 2^{-i} \Mass(R \trace B(x,\rho))
\end{split}
\end{equation*}
and 
$$\Mass(R \trace (E \setminus F_i)) \leq 2^{-i-1}.$$
Denoting $F:= \bigcup_{i\in \N}\bigcap_{j\geq i}F_j$, and noticing that $E \setminus F= E \cap F^c = E \cap \bigcap_{i\in \N}\bigcup_{j\geq i}F^c_j $ is contained in $\bigcup_{j\geq i}F^c_j$ for every $i\in \N$, we have
$$\Mass(R\trace(E\setminus F)) \leq \lim_{i \to \infty}\sum_{j=i}^\infty \Mass(R\trace(E\setminus F_{j}))\leq \lim_{i \to \infty}\sum_{j=i}^\infty \frac{1}{2^j}=0$$
and this implies that $\Haus^m(E\setminus F)=0$.
Since every $x \in F$ belongs definitively to every $F_j$ (namely, for every $x \in F$ there exists $i_0(x)\in \N$ such that $x\in F_i$ for every $i \geq i_0(x)$), we obtain \eqref{limite}.
\end{proof}

\begin{proof}[Proof of Proposition \ref{thm:poly_approx}] Let $R$ be represented by $R = \llbracket E, \tau, \theta \rrbracket$ with $\theta \in L^{1}(\Haus^{m} \trace E ; \left( 0, \infty \right))$.
We denote  
$$\mu:=\theta \Haus^m\trace E.$$
Moreover, if $\MM(R)<+\infty$, we define the positive finite measure
$$ \nu:=H(\theta) \Haus^m\trace E.$$
Fix $\e>0$.
We make the following

{\bf{Claim}:} There exists a finite family of mutually disjoint balls $\{B_i\}_{i=1}^N$ with $B_i:=B(x_i,r_i)$, such that the following properties are satisfied:
\begin{itemize}
\item[$(i)$] $$r_{i} \leq \e \qquad \forall \, i=1,\dots,N \qquad \mbox{and} \qquad \mu(\R^n \setminus (\cup_{i=1}^N  B_i))\leq \e;$$
\item[$(ii)$] if we denote $R_{i} := R \trace B_{i}$ and $S_{i} := S_{x_{i}, r_{i}}$, then $$ \Flat(R_{i} - S_{i}) \leq \e \mu(B_{i}); $$
\item[$(iii)$] $$|\mu(B_i)-\theta(x_i) \omega_m r_i^m|\leq \e \mu(B_i), \qquad \forall \, i=1,\dots, N;$$
\item[$(iv)$] if $\MM(R)<+\infty$, then
$$ H(\theta(x_i)) \omega_m r_i^m\leq (1+\e)\nu(B_i), \qquad \forall \, i=1,\dots, N.$$
\end{itemize} 

Let us for the moment assume the validity of the claim and see how to conclude the proof of the proposition. 

By point $(iii)$ in the claim we deduce 
\begin{equation}\label{stimamassa}
\Mass(S_{i}) \leq (1 + \e) \Mass(R_{i}).
\end{equation}
and by point  $(iv)$ we get
\begin{equation}\label{stimaH}
\MM(S_{i}) \leq (1 + \e) \MM(R_{i}).
\end{equation}
On the other hand, we can find a polyhedral chain $P_i \in \mathbf P_m(\R^n)$ (supported on $\pi_i \cap B_i$, $\pi_{i} := \pi_{x_{i}}$), such that
\begin{equation}\label{pezzoIIII}
\Flat(P_i-S_i)\leq \e \mu(B_i), \qquad \MM(P_i)\leq \MM(S_i) \qquad \mbox{and} \qquad \Mass(P_i)\leq \Mass(S_i).
\end{equation}
Indeed, it is enough to approximate the $m$-dimensional current $S_i$ with simplexes with constant multiplicity and supported in $B_{i} \cap \pi_{i}$. 

To conclude, we denote $P:=\sum_{i=1}^NP_i$ and we estimate
\begin{equation}\label{pezzoIV}
\begin{split}
\Flat(R-P)&\leq \sum_{i=1}^N\Flat(R_i-P_i)+\Mass(R\trace (\R^n \setminus (\cup_{i=1}^N  B_i))) \\
 &\overset{(i)}{\leq} \e +\sum_{i=1}^N\Flat(R_i-S_i)+\sum_{i=1}^N\Flat(S_i-P_i)\overset{(ii),\eqref{pezzoIIII}}{\leq} \e+2\sum_{i=1}^N \e \mu(B_i)\leq \e + 2\e \Mass(R).
\end{split}
\end{equation}

Moreover
\begin{equation}\label{pezzoV}
\MM(P)= \sum_{i=1}^N \MM(P_i) \overset{\eqref{pezzoIIII}}{\leq}  \sum_{i=1}^N \MM(S_i)\overset{\eqref{stimaH}}{\leq} (1+\e)\sum_{i=1}^N \MM(R_i)\leq (1+\e)\MM(R)
\end{equation}
and
\begin{equation}\label{pezzoVI}
\Mass(P)= \sum_{i=1}^N \Mass(P_i) \overset{\eqref{pezzoIIII}}{\leq}  \sum_{i=1}^N \Mass(S_i)\overset{\eqref{stimamassa}}{\leq}(1+\e)  \sum_{i=1}^N \Mass(R_i)\leq (1+\e)\Mass(R).
\end{equation}

{\bf{Proof of the Claim}:} Consider the set $F$ of points $x\in E$ such that the following properties hold:
\begin{enumerate}
\item $x$ satisfies $$ \lim_{r\to 0} \frac{\Flat(R \trace B(x,r) - S_{x,r})}{\Mass(R \trace B(x,r))} = 0; $$
\item denoting $\eta_{x,r}:\R^n \to \R^n$ the map $y \mapsto \frac{y-x}{r}$, we have the following convergences of measures for $r \to 0$:
\begin{equation}\label{weakuno}
\mu_{x,r}:= r^{-m} (\eta_{x,r})_\#(\mu\trace B(x,r)) \rightharpoonup \theta(x)\Haus^m\trace ((x+\mbox{span}(\tau(x)))\cap B(0,1)),
\end{equation}
and
\begin{equation}\label{weakdue}\nu_{x,r}:= r^{-m} (\eta_{x,r})_\#(\nu\trace B(x,r)) \rightharpoonup H(\theta(x))\Haus^m\trace ((x+\mbox{span}(\tau(x)))\cap B(0,1)).
\end{equation}
\end{enumerate}
We observe that properties (1) and (2) hold for $\mu$-a.e. point. Indeed the fact that (1) holds for $\mu$-a.e. $x$ follows from Corollary \ref{lemma_qo}, while the fact that (2) holds for $\mu$-a.e. $x$ is a consequence of \cite[Theorem 4.8]{DL}. Moreover, by \eqref{weakuno} and by \eqref{weakdue}, for every $x\in F$ there exists a radius $r(x)<\e$ such that 
$$
|\mu_{x,r}(B(0,1))-\theta(x) \omega_m|\leq \frac{\e}2\theta(x) \omega_m,  \qquad \mbox{for a.e. } r<r(x).
$$
This inequality implies that
\begin{equation}\label{intermedia}
|\mu(B(x,r))-\theta(x) \omega_m r^m|\leq \frac{\e}2 \theta(x) \omega_m r^m, \qquad \mbox{for a.e. } r<r(x),
\end{equation}
so that in particular
$$\theta(x)\left(1-\frac{\e}{2}\right) \omega_m r^m \leq \mu(B(x,r)), \qquad \mbox{for a.e. } r<r(x).
$$
Plugging the last inequality in the right-hand side of \eqref{intermedia}, we get
$$|\mu(B(x,r))-\theta(x) \omega_m r^m|\leq \frac{\e}{2-\e}\mu(B(x,r))\leq \e \mu(B(x,r)), \qquad \mbox{for a.e. } r<r(x).$$
which gives condition $(iii)$ of the Claim.

Analogously, we get that
$$|\nu(B(x,r))-H(\theta(x)) \omega_m r^m|\leq \e \nu(B(x,r)), \qquad \mbox{for a.e. } r<r(x).$$ 

The validity of the claim is then obtained via the Vitali-Besicovitch covering theorem (\cite[Theorem 2.19]{AFP}).

\end{proof}

\section{Proof of Proposition~\ref{thm:normal}}
We first observe that the condition \eqref{ratio} is necessary for the validity of \eqref{non-rect}. Indeed, consider a map $H$ as in Assumption \ref{hp:funzione_H} for which \eqref{ratio} does not hold. It means that there exists a constant $C>0$ and a sequence $\{\theta_i\}_{i\in \N}$ converging to $0$ such that $H(\theta_i)\leq C\theta_i$ for every $i \in \N$.
We consider now the sequence of polyhedral $m$-chains $\{P_i\}_{i\in \N}$ supported in the unit cube $[0,1]^{n}$ and  defined as 
$$P_i:=\sum_{j=1}^{N_i}\llbracket \pi_i^j\cap [0,1]^n,\tau,\theta_i\rrbracket,$$
where for $i$ fixed, $\pi_i^j$ are $m$-planes parallel to $\{x_{m+1}=\ldots=x_n=0\}$ whose last $(n-m)$ coordinates are ``uniformly distributed'' in $[0,1]^{n-m}$,  $\tau$ is a fixed orientation for all the $m$-planes $\pi_i^j$ not depending on $i$ or $j$ and $N_i:=\min\{N\in \N: N\theta_i \geq 1\}$.
Since $\theta_i\to 0$, then $N_i\to \infty$. For $i$ large enough, so that $\theta_i N_i\leq 2$, we can compute
$$\Phi_{H}(P_{i})=\sum_{j=1}^{N_i} \Phi_{H}(\llbracket \pi_i^j\cap [0,1]^n,\tau,\theta_i\rrbracket)=N_iH(\theta_i)\leq CN_i\theta_i \leq 2C.$$
Nevertheless, since $\theta_iN_i\to 1$, then the sequence $\{P_i\}_{i\in \N}$ converges in flat norm to the $m$-current $T$, acting on $m$-forms as
$$
\langle T, \omega \rangle = \int_{[0,1]^n} \langle \omega(x), \tau \rangle \, d\mathcal{L}^n(x),
$$
which belongs to $(\F_m(\R^n)\cap  \lbrace T \in \mathscr{E}_{m}(\R^{n}) \, \colon \, \Mass(T) < \infty \rbrace)\setminus \Rc_m(\R^n)$. Clearly, $F_{H}(T) \leq 2C$.\\

We show now that, if $H$ is also monotone non-decreasing on $\left[ 0, \infty \right)$, then condition \eqref{ratio} is also sufficient to the validity of \eqref{non-rect}. The proof is a consequence of the definition of $F_H$ in \eqref{eq:funzionale_F} and the following Lemma (see also \cite[Lemma 4.5]{CDRM}):
\begin{lemma}\label{lemma:limit-rect}
Assume $H$ is as in Assumption \ref{hp:funzione_H}, is monotone non-decreasing on $\left[ 0, \infty \right)$, and satisfies \eqref{ratio}. Let $\{R_j\}_{j \in \mathbb{N}}\subset \mathbf{R}_m(\R^n)$ and let us assume that
$$
\sup_{j\in \N}\Mass_H( R_j)\leq C<+\infty.
$$
If $\lim_{j\to \infty} \Flat
(R_j - T ) = 0$ for some $T \in \mathbf{F}_m(\R^n)$ with finite mass, then $T$ is in fact rectifiable. 
\end{lemma}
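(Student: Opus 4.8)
The plan is to prove Lemma~\ref{lemma:limit-rect} by establishing rectifiability of $T$ through a structure-theoretic argument: decompose $T$ into its absolutely continuous, singular, and non-rectifiable parts and use the $H$-mass bound together with \eqref{ratio} to rule out the pathological pieces. Since $T$ is a flat chain with finite mass supported in a compact set, it can be represented by integration against a vector measure; write $T = \vec{T}\,\|T\|$ where $\|T\|$ is a finite Radon measure. Decompose $\|T\| = \mu_a + \mu_s$ with $\mu_a \ll \mathcal{H}^m$ and $\mu_s \perp \mathcal{H}^m$, and further split off from $\mu_a$ the part carried by a purely unrectifiable set. The goal is to show both $\mu_s$ and the unrectifiable part vanish, and that $T$ restricted to the rectifiable part is actually a rectifiable current (i.e.\ has the correct tangent structure), so that $T \in \mathbf{R}_m(\R^n)$.

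The main quantitative input comes from comparing densities. First I would invoke lower semicontinuity of mass under flat convergence to get $\Mass(T) \leq \liminf_j \Mass(R_j)$, but more importantly I would use the finer fact that for the weak-$*$ limit measure one controls the upper $m$-density of $\|T\|$ at typical points. At a point $x$ where $\mu_s$ concentrates, or where $\|T\|$ lives on an unrectifiable set, the density ratio $\|T\|(B(x,r))/(\omega_m r^m)$ either blows up or behaves badly; meanwhile the $H$-mass bound says $\sup_j \Mass_H(R_j) \le C$, and by Proposition~\ref{prop:lsc} (lower semicontinuity of $\MM$) any rectifiable limit would satisfy $\MM(T) \le C$. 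The key mechanism: condition \eqref{ratio}, $H(\theta)/\theta \to \infty$ as $\theta \searrow 0$, together with monotonicity of $H$, forces the approximating chains $R_j$ to have small multiplicity wherever they spread out thinly, and small multiplicity is expensive in $H$-mass relative to ordinary mass. Concretely, on any region where $T$ is diffuse (non-atomic tangential structure), the $R_j$ must use many sheets of small multiplicity $\theta$, costing $H$-mass $\approx (\text{sheet area}) \cdot H(\theta)/\theta \cdot \theta \gg (\text{sheet area})\cdot\theta$; summing, the total $H$-mass would diverge, contradicting the uniform bound. This is the heart of the argument and I expect it to require a careful covering/slicing argument, quite possibly reusing the integral-geometric slicing from Lemma~\ref{maestosa-integral-geo} to reduce to the $0$-dimensional case, where "diffuse" means the limiting measure has a non-atomic part and \eqref{ratio} directly prohibits a uniform $H$-mass bound on atomic approximants converging to it.

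More precisely, for $m = 0$ one argues: if $T\trace A$ had a non-atomic part, pick a small ball $B$ with $0 < \|T\|(B) = \delta$ small but $B$ containing no atom of mass $\ge \delta/2$; then each $R_j$ restricted to $B$ (for large $j$, using flat convergence localized as in Step~1 of the proof of Proposition~\ref{prop:lsc}) must consist of atoms whose signed masses sum to roughly $\delta$ but whose individual masses are small, say all $\le \theta_0$ where we may take $\theta_0$ as small as we like by choosing $B$ and $j$ appropriately; monotonicity and subadditivity of $H$ give $\Mass_H(R_j \trace B) \ge \frac{H(\theta_0)}{\theta_0}\,\|R_j\|(B) \gtrsim \frac{H(\theta_0)}{\theta_0}\,\delta$, which by \eqref{ratio} exceeds $C$ once $\theta_0$ is small enough — contradiction. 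Hence $T\trace A$ is atomic for every $A$, i.e.\ $T$ is rectifiable. For general $m$, slice by generic orthogonal projections $p_V$: by \eqref{eqn:flat-slices} the slices $\langle R_j, p_V, y\rangle$ converge in flat norm to $\langle T, p_V, y\rangle$ for a.e.\ $(V,y)$, and by the integral-geometric inequality $\int \Mass_H(\langle R_j, p_V, y\rangle)\,d(\gamma_{n,m}\otimes\Haus^m) \le c^{-1}\Mass_H(R_j) \le c^{-1}C$ (this direction of the integral-geometric identity/inequality holds since each slice of a polyhedral/rectifiable chain is $0$-dimensional rectifiable and density is preserved), so Fatou gives that for a.e.\ $(V,y)$ the slices have uniformly bounded $H$-mass along a subsequence; the $m=0$ case then forces $\langle T, p_V, y\rangle$ to be rectifiable ($0$-dimensional) for a.e.\ $(V,y)$, and by the classical slicing characterization of rectifiability for flat chains with finite mass (a flat chain with finite mass all of whose generic slices are rectifiable is rectifiable — this is where one cites the structure theorem, e.g.\ via \cite[4.3.17]{FedererBOOK} or the White-type criterion) we conclude $T \in \mathbf{R}_m(\R^n)$. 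The main obstacle is making the last slicing-to-rectifiability implication rigorous in the real-coefficient flat-chain setting and ensuring the $H$-mass bound genuinely transfers to slices with the correct density; I would handle this by combining the integral-geometric formula \eqref{e:int_geom} (whose proof in Lemma~\ref{maestosa-integral-geo} only uses $(H1)$) with the observation that Besicovitch-type density bounds extracted from finite $H$-mass already pin down the $m$-rectifiability of $\|T\|$, after which orienting and the multiplicity being $L^1$ are routine.
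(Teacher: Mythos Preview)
Your overall strategy for $m>0$ --- slice by orthogonal projections, pass both the flat convergence and the $H$-mass bound to the $0$-dimensional slices via \eqref{eqn:flat-slices} and Fatou, apply the $m=0$ case, and conclude with White's rectifiable-slices criterion --- is exactly the route the paper takes. The opening paragraph about decomposing $\|T\|$ into absolutely continuous, singular, and unrectifiable parts is never used in what follows and can simply be dropped.

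The $m=0$ step, however, has a genuine gap. You assert that in a ball $B$ where $T$ has no atoms, ``each $R_j$ restricted to $B$ \dots\ must consist of atoms whose \dots\ individual masses are small, say all $\le \theta_0$''. This is false in general: flat convergence is perfectly compatible with $R_j$ carrying a pair of large cancelling atoms inside $B$, for instance $+M\,\delta_{a_j}-M\,\delta_{b_j}$ with $|a_j-b_j|\to 0$, and nothing in the localized flat convergence nor in the hypothesis $\Mass_H(R_j)\le C$ rules this out directly (if $H$ is bounded, such a pair costs only $2H(M)\le 2\sup H$). Consequently your key inequality $\Mass_H(R_j\trace B)\ge \tfrac{H(\theta_0)}{\theta_0}\,\|R_j\|(B)$ is unjustified, and the contradiction does not close.

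The paper's $m=0$ argument avoids this by decomposing each $R_j$ rather than $T$. For a threshold $\delta>0$ one splits $R_j = R_j\trace\{\theta_j<\delta\} + R_j\trace\{\theta_j\ge\delta\}$. Using $\theta\le H(\theta)\cdot\sup_{t\in(0,\theta]}\tfrac{t}{H(t)}$, the first piece has \emph{ordinary} mass at most $C\cdot\sup_{t\in(0,\delta]}\tfrac{t}{H(t)}$, which tends to $0$ with $\delta$ by \eqref{ratio}; the second piece, by monotonicity of $H$, is supported on at most $C/H(\delta)$ points and hence subsequentially converges to a discrete measure. Thus for every $\delta$ the limit $T$ is a discrete measure plus an error of mass $o_\delta(1)$, forcing $T$ to be atomic. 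This splitting-by-multiplicity is the missing mechanism in your $m=0$ step; once it is in place, your slicing reduction for $m>0$ goes through exactly as in the paper.
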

\begin{proof}
{\it Step 1}. We prove the lemma for $m=0$, recalling  that a $0$-dimensional rectifiable current $R=\llbracket E,\tau,\theta \rrbracket$, with $\tau(x)=\pm 1$, is an atomic signed measure (i.e. a measure supported on a countable set). 

We observe that \eqref{ratio} implies that there exists $\delta_0>0$ such that $H(\theta)>0$ for every $\theta \in (0,\delta_0)$.
We define the monotone non-decreasing function $f:[0,\delta_0)\to [0,+\infty)$ given by
$$
f( \theta):= 
\begin{cases}
\sup_{t \in (0,\theta]} \displaystyle \frac{t}{H(t)} & \mbox{ if } 0 < \theta < \delta_{0},\\
0 & \mbox{ if } \theta=0.
\end{cases}
$$
By assumption \eqref{ratio}, $f$ is continuous in $0$ and $H(\theta)f(\theta) \geq \theta$. Fix any $\delta \in (0,\delta_0)$. For any $j\in \N$
\begin{equation*}
\begin{split}
\Mass (R_j \trace &\{ x: \theta_j(x)<\delta \} ) 
= \int_{E_j \cap \{\theta_j<\delta \} } \theta_j(x) \, d \Haus^m(x)
\leq  \int_{E_j \cap \{\theta_j<\delta \} } f(\theta_j(x))H(\theta_j(x)) \, d \Haus^m(x)\\&
\leq f(\delta)\int_{E_j \cap \{\theta_j<\delta \} } H(\theta_j(x)) \, d \Haus^m(x) \leq f(\delta) {\Mass_H(R_j)} \leq C f(\delta).
\end{split}
\end{equation*}
Therefore, up to subsequences the sequence $\{R_j \trace\{ x: \theta_j(x)<\delta \}\}_{j\in\N}$ converges to a signed measure $R_2$ of mass less than or equal to $C f(\delta)$. On the other hand, using the upper bound on $\MM(R_{j})$ and the monotonicity of $H$, we deduce that the measures $R_j \trace \{ x: \theta_j(x)\geq\delta \} $ are supported on a uniformly (with respect to $j$) bounded number of points, and converge to a discrete measure $R_1$. Hence, for any $\e>0$, the limit $T$ can be written as the sum of a discrete measure $R_{1}$ and of an error $R_{2}$ with mass less than or equal to $\e$. Since $\e$ is arbitrary, the statement follows.

{\it Step 2}. We prove the claim for $m>0$. 

We apply \cite[4.3.1]{FedererBOOK} to the sequence $\{R_j\}_{j\in\N}$ to deduce that for any $I \in I(n,m)$
$$
\lim_{j\to \infty} \int_{\R^m} \Flat(\langle R_j-T, p_I, y\rangle) \, dy \leq  \lim_{j\to \infty} \Flat (R_j-T) = 0.
$$
Since the sequence of non-negative functions $\{ \Flat(\langle R_j-T, p_I, \cdot \rangle)\}_{j \in \N}$ converges in $L^1(\R^m)$ to $0$, up to a (not relabelled) subsequence, we get the pointwise convergence
$$\lim_{j \to \infty} \Flat(\langle R_j-T, p_I, y\rangle) = 0 \qquad \mbox{for }\Haus^{m}\mbox{-a.e. } y\in \R^m, \mbox{ for every } I \in I(n,m).$$
We apply the Fatou lemma and \cite[Corollary 3.2.5(5)]{depauwhardt} to the sequence $\{R_j\}_{j\in\N}$ to deduce 
\begin{equation}\label{eqn:mass-alpha-slices}
\int_{\R^m} \liminf_{j\to \infty} \Mass_H(\langle R_j, p_I, y\rangle) \, dy 
\leq \liminf_{j\to \infty}
\int_{\R^m} \Mass_H(\langle R_j, p_I, y\rangle) \, dy \leq \liminf_{j\to \infty} \Mass_H(R_j) \leq C. 
\end{equation}
Hence the integrand in the left-hand side is finite a.e., namely 
$\liminf_{j\to \infty} \Mass_H(\langle R_j, p_I, y\rangle)  < \infty $ for $\Haus^{m}\mbox{-a.e. } y\in \R^m$, for every $I \in I(n,m)$.
Hence we are can apply Step 1 to a.e.\ slice $\langle R_j, p_I, y\rangle$ to a $y$-dependent subsequence and deduce that
\begin{equation}\label{white}
\langle T, p_I, y\rangle \mbox{ is $0$-rectifiable for }\Haus^{m}\mbox{-a.e. } y\in \R^m, \mbox{ for every } I \in I(n,m).
\end{equation}
To conclude the proof we employ Theorem~\cite[Rectifiable slices theorem, pp. 166-167]{white_rect}, which ensures that a finite mass flat chain $T$ is rectifiable if and only if property \eqref{white} holds.
\end{proof}

\begin{samepage}

%%%%%%%%%%%%%%%%%%%%%%%%%%%%%%%%%%%%%%%%%%%%%%%%%%%%%%%%%%%%%%%%%
%
%	BIBLIOGRAPHY
%
%%%%%%%%%%%%%%%%%%%%%%%%%%%%%%%%%%%%%%%%%%%%%%%%%%%%%%%%%%%%%%%%%
%

\Addresses

\end{samepage}

\end{document}